\def\titlerunning#1{\gdef\titrun{#1}}
\def\author#1{\gdef\autrun{\def\and{\unskip, }#1}\gdef\@author{#1}}
\def\address#1{{\def\and{\\\hspace*{18pt}}\renewcommand{\thefootnote}{}%
\footnote {#1}}%
\markboth{\autrun}{\titrun}}
\def\email#1{e-mail: #1}
\def\subjclass#1{{\renewcommand{\thefootnote}{}%
\footnote{\emph{Mathematics Subject Classification (2020):} #1}}}
\def\keywords#1{\par\medskip
\noindent\textbf{Keywords.} #1}
\newtheorem{result}{\textbf{Theorem}}
\newtheorem{proposition}{Proposition}[section]
\newtheorem{lemma}{Lemma}[section]
\newtheorem{definition}{Definition}[section]
\newtheorem{corollary}{Corollary}[section]
\newtheorem{remark}{Remark}[section]
\newtheorem{example}{Example}[section]
\numberwithin{equation}{section}
\begin{document}
\baselineskip=15pt

\titlerunning{Weakly coupled Hamilton-Jacobi systems}
\title{Weakly coupled Hamilton-Jacobi systems without monotonicity condition: A first step}
\author{Panrui Ni
}

\maketitle
\address{Shanghai Center for Mathematical Sciences,  Fudan University, Shanghai 200438, China;
\email{prni18@fudan.edu.cn}
}

\subjclass{35F50; 49L25; 35B40}

\vspace{-10ex}


\begin{abstract}
In this paper, we mainly focus on the existence of the viscosity solutions of
\begin{equation*}
  \left\{
   \begin{aligned}
   &H_1(x,Du_1(x),u_1(x),u_2(x))=0,\\
   &H_2(x,Du_2(x),u_2(x),u_1(x))=0.\\
   \end{aligned}
   \right.
\end{equation*}
The standard assumption for the above system is called the monotonicity condition, which requires that $H_i$ is increasing in $u_i$ and decreasing in $u_j$ for each $i,j\in\{1,2\}$ and $i\neq j$. In this paper, it is assumed that $H_i$ is either increasing or decreasing in $u_i$, and may be non-monotone in $u_j$. The existence of viscosity solutions is proved when
\[\chi:=\sup_{u,v,w\in\mathbb R}\bigg|\frac{\partial_{u_2} H_1(x,0,0,u)}{\partial_{u_1} H_1(x,0,v,w)}\bigg|\cdot
\sup_{u,v,w\in\mathbb R}\bigg|\frac{\partial_{u_1} H_2(x,0,0,u)}{\partial_{u_2} H_2(x,0,v,w)}\bigg|<1.\]
Then we consider
\begin{equation*}
  \left\{
   \begin{aligned}
   &h_1(x,Du_1(x))+\Lambda_1(x)(u_1(x)-u_2(x))=c,\\
   &h_2(x,Du_2(x))+\Lambda_2(x)(u_2(x)-u_1(x))=\alpha(c).\\
   \end{aligned}
   \right.
\end{equation*}
It turns out that for each $c\in\mathbb R$, there is a unique constant $\alpha(c)\in\mathbb R$ such that the above system has viscosity solutions. The function $c\mapsto \alpha(c)$ is non-increasing and Lipschitz continuous. In the appendix, the large time convergence of the viscosity solution of evolutionary weakly coupled systems is proved when $\chi<1$.

\keywords{Hamilton-Jacobi systems; viscosity solutions; existence of solutions}
\end{abstract}




\section{Introduction and main results}

The present paper focuses on weakly coupled systems of Hamilton-Jacobi equations. In this paper, we assume that $M$ is a connected, closed (compact without boundary) and smooth Riemannian manifold. We denote by $T^*M$ the cotangent bundle over $M$, and denote by $|\cdot|_x$ the norms induced by the Riemannian metric $g$ on both tangent and cotangent spaces of $M$. We also denote by $D$ the spatial gradient with respect to $x\in M$, and denote by $C(M)$ (resp. $C(M,\mathbb R^2)$) the space of $\mathbb R$-valued (resp. $\mathbb R^2$-valued) continuous functions on $M$. For $i\in\{1,2\}$, let $H_i:T^*M\times\mathbb R^2\to\mathbb R$ be a continuous function. We will deal with the viscosity solutions of
\begin{equation}\label{E}
  H_i(x,Du_i(x),u_i(x),u_j(x))=0,\quad x\in M,\ i,j\in \{1,2\},\ i\neq j
\end{equation}
in this paper and thus we mean by ``solutions" viscosity solutions. The unknown function in (\ref{E}) is $(u_1,u_2)\in C(M,\mathbb R^2)$. The system is weakly coupled in the sense that every $i$th equation depends only on $D u_i$, but not on $Du_j$ for $j\neq i$. Studies on the system (\ref{E}) and other related ones are motivated by searching for optimal strategy of classical or random switching problems, see \cite{opt1, opt2} and the references therein.

\begin{definition}(Viscosity solution).
An upper semi-continuous (u.s.c. in short) (resp. lower semi-continuous (l.s.c. in short)) function $(u_1,u_2):M\rightarrow\mathbb R^2$ is called a viscosity subsolution (resp. supersolution) of (\ref{E}), if for each $i,j\in\{1,2\}$, $i\neq j$ and test function $\phi$ of class $C^1$, when $u_i-\phi$ attains its local maximum (resp. minimum) at $x$, then
\begin{equation*}
  H_i(x,D\phi(x),u_i(x),u_j(x))\leq 0,\quad (\textrm{resp}.\ H_i(x,D\phi(x),u_i(x),u_j(x))\geq 0).
\end{equation*}
A function $(u_1,u_2):M\to\mathbb R^2$ is called a viscosity solution of (\ref{E}) if its u.s.c. and l.s.c. envelopes are respectively a viscosity subsolution and a viscosity supersolution. In this paper, we only consider continuous viscosity solutions.
\end{definition}
\noindent The standard assumption for (\ref{E}) is the classical monotonicity condition:
\begin{itemize}
\item [$\diamond$] for any $(x,p)\in T^*M$ and $(u_1,u_2)$, $(v_1,v_2)\in\mathbb R^2$, if $u_k-v_k=\max_{1\leq i\leq 2}(u_i-v_i)\geq 0$, then $H_k(x,p,u_1,u_2)\geq H_k(x,p,v_1,v_2)$.
\end{itemize}
This condition implies that
\begin{equation}\label{inde}
  H_i(x,p,u_i,u_j)\ \textrm{is}\ \textrm{increasing}\ \textrm{in}\ u_i\ \textrm{and}\ \textrm{nonincreasing}\ \textrm{in}\ u_j\ \textrm{for\ each}\ i, j \in \{1,2\},\ i\neq j.
\end{equation}
When the coupling is linear, that is, when $H_i$ has the form
\begin{equation}\label{LH}
  H_i(x,p,u_1,u_2)=h_i(x,p)+\sum_{j=1}^2\lambda_{ij}(x)u_j,
\end{equation}
the classical monotonicity condition ($\diamond$) holds if and only if
\begin{equation}\label{stan}
  \lambda_{ij}(x)\leq 0\ \textrm{if}\ i\neq j\quad \textrm{and}\quad \sum_{j=1}^2\lambda_{ij}(x)\geq 0\ \textrm{for\ all}\ i\in\{1,2\}.
\end{equation}
If the coupling matrix $(\lambda_{ij}(x))$ is irreducible, by (\ref{stan}) we have
\begin{equation}\label{mono2}
  \lambda_{11}(x)>0,\quad \lambda_{22}(x)>0,\quad \lambda_{12}(x)<0, \quad\lambda_{21}(x)<0,
\end{equation}
and
\begin{equation}\label{mcl}
  \lambda_{11}(x)+\lambda_{12}(x)\geq 0, \quad \lambda_{22}(x)+\lambda_{21}(x)\geq 0.
\end{equation}
In the present paper, an existence result of solutions of (\ref{E}) will be provided, where (\ref{inde}) may not hold. From a theoretical point of view, this result can describe a more general form of coupling. From the perspective of game theory, this can characterize influence between players. The value function $u_i(x)$ of the $i$th player can depend either increasingly or decreasingly on the value function $u_j(x)$ of the $j$th player. The existence of solutions of (1.1) can be interpreted as the existence of equilibrium points.

About the weakly coupled systems, there are several topics of concern:
\begin{itemize}
\item [$\centerdot$] The stationary weakly coupled systems. For the existence theorems and the comparison results, one can refer to \cite{CGT,Eng,Len}, and \cite{ABR,Ish3} for the second order case. An algorithm was constructed in \cite{SZ} which allows obtaining a solution as the limit of a monotonic sequence of subsolutions. For the weak KAM theory, one can refer to \cite{Davi1}. For the vanishing discount problem, one can refer to \cite{Davi2,Ish5}.

\item [$\centerdot$] The evolutionary weakly coupled systems. 
The representation formula is provided in the deterministic setting by \cite{JL2} and in the random setting by \cite{ran}. One can refer to \cite{Z} for the representation in view of the twisted Lax-Oleinik formula. For the large time behavior, one can refer to \cite{vis1,Cam2,Mit2,Ng}. For the homogenization theory, one can refer to \cite{Cam1,Mit4}.
\end{itemize}
All the previous works mentioned above require the classical monotonicity condition ($\diamond$) except \cite{JL2}. Particularly, most works focus on the linear coupling case with $k$ coupled Hamilton-Jacobi equations ($k\geq 2$), that is,
\begin{equation}\label{LEk}
  h_i(x,Du_i(x))+\sum_{j=1}^k \lambda_{ij}(x)u_j(x)=0,\quad x\in M,\ i\in\{1,\dots,k\},
\end{equation}
where for each $i,j\in\{1,\dots,k\}$, $i\neq j$, we have
\begin{equation}\label{sum=0}
  \lambda_{ii}(x)>0,\quad \lambda_{ij}(x)\leq 0,\quad \sum_{l=1}^k\lambda_{il}(x)=0.
\end{equation}
For the case $k=2$ considered in the present paper, (\ref{LEk}) becomes
\begin{equation}\label{LE}
  h_i(x,Du_i(x))+\sum_{j=1}^2\lambda_{ij}(x)u_j(x)=0,\quad x\in M,\ i\in\{1,2\}.
\end{equation}
When (\ref{sum=0}) holds, (\ref{LE}) becomes
\begin{equation}\label{=c}
  h_i(x,Du_i(x))+\Lambda_{i}(x)(u_i(x)-u_j(x))=c,\quad x\in M,\ i,j\in\{1,2\},\ i\neq j.
\end{equation}
In general, (\ref{=c}) does not have solutions if $c=0$. It was proved in \cite[Theorem 2.12]{Davi1} that there is a unique constant $c$ such that (\ref{=c}) has solutions. This result is quite similar to that for single equations. Since (\ref{=c}) is a system, it is natural to consider the situation
\[h_i(x,Du_i(x))+\Lambda_{i}(x)(u_i(x)-u_j(x))=c_i,\]
where $c_1\neq c_2$. This situation has been considered in \cite[Theorem 1.3]{CGT} when $\Lambda_1(x)$ and $\Lambda_2(x)$ are constants. A generalized form of this result will be given in this paper.

\subsection{Nonlinear coupling}\label{1.1}

The main assumptions of the Hamiltonians $H_i:T^*M\times\mathbb R^2\rightarrow\mathbb R$ with $i,j\in \{1,2\}$ and $i\neq j$ are given below.
\begin{itemize}
\item [\textbf{(H1)}] $H_i(x,p,u_i,u_j)$ is continuous.

\item [\textbf{(H2)}] $H_i(x,p,u_i,u_j)$ is superlinear in $p$, i.e. there exists a function $\theta:\mathbb R\rightarrow\mathbb R$ satisfying
\begin{equation*}
  \lim_{r \rightarrow+\infty}\frac{\theta(r)}{r}=+\infty,\quad \textrm{and}\quad H_i(x,p,0,0)\geq \theta(|p|_x)\quad \textrm{for\ every}\ (x,p)\in T^*M.
\end{equation*}


\item [\textbf{(H3)}] $H_i(x,p,u_i,u_j)$ is convex in $p$.

\item [\textbf{(H4)}] $H_i(x,p,u_i,u_j)$ is uniformly Lipschitz continuous in $u_i$ and $u_j$, i.e., there is $\Theta>0$ such that
\[|H_i(x,p,u_i,u_j)-H_i(x,p,v_i,v_j)|\leq \Theta\max\{|u_i-v_i|,|u_j-v_j|\}.\]

\item [\textbf{(H5)}] $H_i(x,p,u_i,u_j)$ is strictly increasing in $u_i$, there exists $\lambda_{ii}>0$ such that for all $(x,p,v)\in\ T^*M\times\mathbb R$ we have
\begin{equation*}
   H_i(x,p,u_i,v)-H_i(x,p,v_i,v)\geq \lambda_{ii}(u_i-v_i),\quad \forall u_i\geq v_i.
\end{equation*}

\item [\textbf{(H6)}] $H_i(x,p,u_i,u_j)$ is strictly decreasing in $u_i$, there exists $\lambda_{ii}>0$ such that for all $(x,p,v)\in\ T^*M\times\mathbb R$ we have
\begin{equation*}
   H_i(x,p,u_i,v)-H_i(x,p,v_i,v)\leq -\lambda_{ii}(u_i-v_i),\quad \forall u_i\geq v_i.
\end{equation*}
\item [\textbf{(H7)}] there are two constants $b_{12}>0$ and $b_{21}>0$ such that for all $i, j\in\{1,2\}$, $i\neq j$, for all $(x,v)\in M\times\mathbb R$ and for all $u_i\neq v_i,u_j\neq v_j$,
    \begin{equation*}
    \bigg|\frac{H_i(x,0,0,u_j)-H_i(x,0,0,v_j)}{u_j-v_j}\cdot \frac{u_i-v_i}{H_i(x,0,u_i,v)-H_i(x,0,v_i,v)}\bigg|
   \leq b_{ij}.
\end{equation*}
\end{itemize}
In the proof of Theorem \ref{M1}, as an intermediate step, we will replace (H2) by
\begin{itemize}
\item [\textbf{(H8)}] $H_i(x,p,u_i,u_j)$ is coercive in $p$, i.e. $\lim_{|p|_x\rightarrow +\infty}(\inf_{x\in M}H_i(x,p,0,0))=+\infty$.
\end{itemize}
\begin{remark}
Let $H_i(x,p,u_i,u_j)$ satisfy either (H5) or (H6). If there is a constant $\lambda_{ij}>0$ such that
\[|H_i(x,0,0,u_j)-H_i(x,0,0,v_j)|\leq \lambda_{ij}|u_j-v_j|,\quad \forall x\in M,\ \forall u_j,v_j\in\mathbb R,\]
then (H7) holds if we take $b_{ij}=\lambda_{ij}/\lambda_{ii}$. 
When the Hamiltonian $H_i$ is smooth, the condition (H7) can be guaranteed by
\[\bigg|\frac{\partial_{u_j}H_i(x,0,0,u)}{\partial_{u_i}H_i(x,0,v,w)}\bigg|\leq b_{ij},\quad \forall x\in M,\ \forall u,v,w\in\mathbb R,\ \forall i, j\in\{1,2\},\ i\neq j.\]
For the linear coupling case (\ref{LH}) with a continuous coupling matrix satisfying $\lambda_{11}(x)\neq 0$ and $\lambda_{22}(x)\neq 0$, (H7) automatically holds, and the constants $b_{ij}$ in (H7) are given by
\begin{equation*}\label{bij}
  b_{ij}=\max_{x\in M}\left|\frac{\lambda_{ij}(x)}{\lambda_{ii}(x)}\right|.
\end{equation*}
\end{remark}

Now we define an important constant \[\chi:=b_{12}b_{21}.\] When $\chi$ is small, the coupling is thought to be weak.

Throughout this paper, we call \textbf{(I)} the conditions (H1)(H2)(H4)(H5)(H7), and \textbf{(D)} the conditions (H1)(H2)(H3)(H4)(H6)(H7). It is worth mentioning that, when $H_i(x,p,u_i,u_j)$ is increasing in $u_i$, the convexity assumption (H3) is not needed, see Remark \ref{rem3} below.

\begin{result}\label{M1}
The system (\ref{E}) has viscosity solutions if each $H_i(x,p,u_i,u_j)$ satisfies either the condition (I) or the condition (D), and $\chi<1$.
\end{result}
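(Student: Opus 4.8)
The plan is to recast the system (\ref{E}) as a fixed-point problem for a scalar solution operator and then invoke Schauder's theorem, with the hypothesis $\chi<1$ entering precisely when one closes the a priori $L^\infty$ bound. Fix $w\in C(M)$ and $i\in\{1,2\}$ and look at the scalar equation $H_i(x,Du,u,w(x))=0$. If $H_i$ satisfies (I), this equation is coercive by (H2) and strictly monotone in $u$ by (H5), hence \emph{proper}: large negative (resp.\ positive) constants are strict sub- (resp.\ super-) solutions thanks to (H5), so Perron's method gives a solution, and the comparison principle for proper coercive Hamilton--Jacobi equations (again using (H5)) gives uniqueness. If $H_i$ satisfies (D), the equation is no longer proper, but it is convex by (H3), coercive by (H2), and strictly decreasing in $u$ by (H6); here I would invoke \cite{Ni}, which is built exactly for this type of scalar problem and provides a well-defined (indeed unique) viscosity solution. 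Either way one gets an operator $\mathcal T_i\colon C(M)\to C(M)$, and $(u_1,u_2)$ solves (\ref{E}) if and only if $u_1$ is a fixed point of $\Phi:=\mathcal T_1\circ\mathcal T_2$, with $u_2=\mathcal T_2[u_1]$.

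Next come the uniform a priori bounds. Any continuous viscosity solution $u$ of any equation in the family is locally Lipschitz because the equation is coercive, and on $M$ its gradient satisfies $\theta(|Du|_x)\le 0$ by (H2), so $u$ is $R_0$-Lipschitz with $R_0:=\sup\{r\ge 0:\theta(r)\le 0\}<\infty$, uniformly in $w$; since $M$ is compact this also bounds the oscillation of $\mathcal T_i[w]$ by $R_0\,\mathrm{diam}(M)$. For the sup bound, evaluate the sub/supersolution inequalities of $u=\mathcal T_i[w]$ at a maximum point $x_0$ and a minimum point $x_1$, where the admissible test gradient is $0$. Combining the strict monotonicity in $u_i$ with (H7) in its equivalent form (\ref{H5}) — the decisive point being that the slope factor $\inf_{u_i\ne v_i,\,v}|H_i(x,0,u_i,v)-H_i(x,0,v_i,v)|/|u_i-v_i|$ occurs on both sides and cancels — one obtains
\[
\|\mathcal T_i[w]\|_\infty\le K_i+b_{ij}\|w\|_\infty,\qquad K_i:=\frac{\max_{x}|H_i(x,0,0,0)|}{\lambda_{ii}},
\]
and in case (D) the same type of bound is part of what \cite{Ni} supplies. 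Composing the two estimates yields $\|\Phi[w]\|_\infty\le K_1+b_{12}K_2+\chi\|w\|_\infty$, so, because $\chi<1$, the set $\mathcal K:=\{w\in C(M):\|w\|_\infty\le R,\ w\ \text{is }R_0\text{-Lipschitz}\}$ with $R:=(K_1+b_{12}K_2)/(1-\chi)$ is nonempty, convex, compact (Arzel\`a--Ascoli), and invariant under $\Phi$.

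To finish I would check that $\Phi$ is continuous on $\mathcal K$: if $w_n\to w$ uniformly, the functions $\mathcal T_i[w_n]$ are equi-Lipschitz and uniformly bounded, so every subsequential uniform limit solves the limit equation $H_i(x,Du,u,w)=0$ by the stability of viscosity solutions under uniform convergence of the equations, hence equals $\mathcal T_i[w]$ by uniqueness; therefore $\mathcal T_i[w_n]\to\mathcal T_i[w]$ and $\Phi$ is continuous. Schauder's fixed-point theorem then gives $\bar u_1\in\mathcal K$ with $\Phi[\bar u_1]=\bar u_1$, and $(\bar u_1,\mathcal T_2[\bar u_1])$ is a viscosity solution of (\ref{E}).

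The main obstacle is the scalar solvability in the decreasing case (D): since $H_i$ is strictly decreasing in $u_i$ the equation is not proper, the comparison principle is unavailable, and both the well-posedness of $\mathcal T_i$ and the $L^\infty$ control of $\mathcal T_i[w]$ have to be imported from the convex/coercive analysis of \cite{Ni} (which is why the remark flags that case as resting on that paper). A secondary subtlety, but the one that pins down the statement, is that the $L^\infty$ estimate must be produced with the coupling coefficient $b_{ij}$ rather than a cruder constant such as $\Theta/\lambda_{ii}$; this is exactly what (H7) is engineered to deliver, and it is what makes $\chi=b_{12}b_{21}<1$ the correct threshold for the scheme to close.
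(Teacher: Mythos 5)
Your Schauder fixed-point scheme is essentially sound in the case where both $H_i$ satisfy (I): there $\mathcal T_i$ is single-valued by the comparison principle, the bound $\|\mathcal T_i[w]\|_\infty\le \mathbb H_i/\lambda_{ii}+b_{ij}\|w\|_\infty$ is exactly Lemma~\ref{Iv}, and the continuity of $\Phi$ follows from uniqueness plus stability; this is a legitimate alternative to the paper's Picard iteration. But the argument has two genuine gaps in the (D) case, which you flag as the ``main obstacle'' and then do not close.

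First, $\mathcal T_i$ is not well-defined there. When $H_i$ is strictly decreasing in $u_i$, the scalar equation $H_i(x,Du,u,w)=0$ has in general \emph{multiple} viscosity solutions: Lemma~\ref{dv} is deliberately worded ``the viscosity solutions \ldots exist'' and ``for each viscosity solution,'' and the construction via forward weak KAM solutions in Propositions~\ref{eqvvv} and~\ref{u+} produces solutions that need not be unique. \cite{Ni} gives existence and the relevant estimates but not uniqueness, so there is no canonical single-valued map, and without a continuous selection the Schauder machinery (both the definition of $\Phi$ and your proof of its continuity, which rests on ``equals $\mathcal T_i[w]$ by uniqueness'') collapses. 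The paper sidesteps this entirely: the iteration (\ref{un}) simply picks \emph{some} solution at each stage, the a priori bound of Lemma~\ref{Dunvn} holds for every choice, and Arzel\`a--Ascoli plus stability closes the argument with no need for a fixed-point theorem or a selection.

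Second, your a priori bound $\|\mathcal T_i[w]\|_\infty\le K_i+b_{ij}\|w\|_\infty$ is wrong in the (D) case. The bound that actually comes out of the weak KAM analysis (Lemma~\ref{dv}) is
\[
\|u\|_\infty\le (1+A)\frac{\mathbb H_i}{\lambda_{ii}}+\bar b_{ij}\|w\|_\infty+B,\qquad \bar b_{ij}=(1+A)b_{ij}+A,
\]
with $A=\Theta\mu e^{\Theta\mu}$, $B=C\mu e^{\Theta\mu}$, where $\mu,C$ come from Lemma~\ref{mu}. The effective contraction ratio is $\bar\kappa=\bar b_{12}\bar b_{21}$, which is strictly larger than $\chi$. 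The reason $\chi<1$ nonetheless suffices is precisely the superlinearity (H2): it keeps the Lagrangian finite on all of $TM$, which lets one take $\mu$ (hence $A,B$) as small as desired so that $\bar\kappa$ drops below $1$. Your write-up says ``in case (D) the same type of bound is part of what \cite{Ni} supplies,'' but the constant is not the same, and the passage from $\bar\kappa<1$ to $\chi<1$ via shrinking $\mu$ is exactly the content of Proposition~\ref{M1'}(b),(c) together with the remark following it. Without that step you would only prove the theorem under the stronger hypothesis $\bar\kappa<1$, not under $\chi<1$.
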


\begin{remark}\label{detB}
To see what is new in the above theorem, let us consider the linear coupling case (\ref{LE}). To apply Perron's method \cite[Theorem 3.3]{Ish3}, (\ref{mcl}) is not enough, we need (\ref{mono2}) and
\begin{equation}\label{perron}
  \lambda_{11}(x)+\lambda_{12}(x)>0, \quad \lambda_{22}(x)+\lambda_{21}(x)>0.
\end{equation}
Then one can take $C>0$ large enough such that $(C,C)$ (resp. $(-C,-C)$) is a supersolution (resp. subsolution) of (\ref{LE}). 
Comparing to (\ref{mono2}), Theorem \ref{M1} can handle the following cases
\[\textrm{Case\ 1.}\ \lambda_{11}(x)>0, \lambda_{22}(x)>0,\quad \textrm{Case\ 2.}\ \lambda_{11}(x)>0, \lambda_{22}(x)<0,\]
\[\textrm{Case\ 3.}\ \lambda_{11}(x)<0, \lambda_{22}(x)>0,\quad \textrm{Case\ 4.}\ \lambda_{11}(x)<0, \lambda_{22}(x)<0,\]
and $\lambda_{12}(x),\lambda_{21}(x)$ are allowed to change signs. Let (\ref{mono2}) holds. It is obvious that (\ref{mcl}) implies $\chi\leq 1$, and (\ref{perron}) implies $\chi<1$. In Proposition \ref{perronequ} below, it will be shown that Theorem \ref{M1} has nothing new compared to (\ref{perron}) when (\ref{mono2}) holds.
Although $\chi<1$ and (\ref{perron}) are essentially equivalent, the condition $\chi<1$ can include more situations than (\ref{perron}), see (\ref{Ee}) below as an example, where $\lambda_{11}(x)+\lambda_{12}(x)=0$ and $\lambda_{22}(x)+\lambda_{21}(x)>0$. Here is another simple example that cannot be included in (\ref{perron}): $\lambda_{11}=1$, $\lambda_{12}=-2$, $\lambda_{22}=4$ and $\lambda_{21}=-1$. Here $\lambda_{11}+\lambda_{12}<0$ and $\chi<1$. Let $\Lambda(x)=(\lambda_{ij}(x))_{i,j\in\{1,2\}}$ be the coupling matrix. When $\Lambda(x)$ is a constant matrix, and (\ref{mono2}) holds, the condition $\chi<1$ is equivalent to
\[\det \Lambda=\lambda_{11}\lambda_{22}-\lambda_{12}\lambda_{21}>0,\]
which is a better condition depending on the whole weakly coupled system comparing to (\ref{perron}). When (\ref{mono2}) fails, the existence result in Theorem \ref{M1} is new. 
\end{remark}

\begin{remark}
By Lemma \ref{stuni}, the solution of (\ref{LE}) is unique when $\chi<1$ and (\ref{mono2}) hold. When (\ref{mono2}) fails, the uniqueness may not hold. Here is an example
\begin{equation*}
  \left\{
   \begin{aligned}
   &|Du_1|^2+2u_1-u_2=0,\\
   &|Du_2|^2-4u_2-u_1=0,\\
   \end{aligned}
   \right.
\end{equation*}
where $x$ belongs to the unit circle $\mathbb S^1\simeq [-1,1)$. Then $\chi=1/8<1$. The first equation is increasing in $u_1$, while the second equation is decreasing in $u_2$. Let $f(x)$ be the restriction of $x^2$ on $[-1,1)$. There are two solutions of the above system: $u^{(1)}_1=u^{(1)}_2=0$, and
\[u^{(2)}_1=\frac{1}{8}(\sqrt{13}-1)f(x), \quad u^{(2)}_2=\frac{1}{8}(\sqrt{13}+5)f(x).\]
\end{remark}

\begin{remark}\label{rem1}
The condition (H2) can be relaxed to (H8) when $H_i$ satisfies (H5) for all $i\in\{1,2\}$. See Proposition \ref{proI-I} in Section \ref{a}. If there is $i\in\{1,2\}$ such that $H_i$ satisfies (H6) and (H8), $\chi<1$ must be replaced by a more complicated one, see Propositions \ref{proD-D} and \ref{proI-D} below.
\end{remark}

\begin{remark}\label{rem3}
Consider the single Hamilton-Jacobi equation
\begin{equation}\label{hjj}
  H(x,D u(x),u(x))=0,\quad x\in M.
\end{equation}

\begin{itemize}
\item [(a)] $H(x,p,u)-H(x,p,v)\geq \delta (u-v)$ for all $u>v$ and for some $\delta>0$.

\item [(b)] $H(x,p,u)-H(x,p,v)\leq -\delta (u-v)$ for all $u>v$ and for some $\delta>0$.
\end{itemize}



\noindent For Case (a), the existence of solution of (\ref{hjj}) is given by Perron's method. This is why (H3) is not needed when $H_i$ is increasing in $u_i$. For Case (b), the existence of solutions of (\ref{hjj}) is given by Proposition \ref{m3}. The proof relies on the solution semigroup, so the convexity of $H$ in $p$ is needed. This explains why we need (H3) when $H_i$ is decreasing in $u_i$. Different from Proposition \ref{m3}, we need the Hamiltonian to be superlinear in $p$ to get the existence of solutions of (\ref{E}) when there is $i\in\{1,2\}$ such that (H6) holds. 
\end{remark}

\subsection{Linear coupling with monotonicity}\label{1.2}

For $i,j\in\{1,2\}$, we assume that $h_i:T^*M\to\mathbb R$ and $\lambda_{ij}(x)$ in (\ref{LE}) are continuous. Assume




\begin{itemize}
\item [\textbf{(h1)}] $h_i(x,p)$ is coercive in $p$, i.e. $\lim_{|p|_x\rightarrow +\infty}(\inf_{x\in M}h_i(x,p))=+\infty$.


\end{itemize}

Consider the following Cauchy problem
\begin{equation}\label{LCau}
  \left\{
   \begin{aligned}
   &\partial_t u_i(x,t)+h_i(x,Du_i(x,t))+\sum_{j=1}^2\lambda_{ij}(x)u_j(x,t)=0,\quad i\in\{1,2\},\\
   &u_i(x,0)=\varphi_i(x)\in C(M).\\
   \end{aligned}
   \right.
\end{equation}
\begin{result}\label{M4}
Assume (h1)(\ref{mono2}). When $\chi<1$, the solution $(v_1,v_2)$ of (\ref{LE}) exists and is unique, and the unique solution $(u_1(x,t),u_2(x,t))$ of (\ref{LCau}) uniformly converges to $(v_1,v_2)$ as $t\to+\infty$ for each continuous initial function $(\varphi_1,\varphi_2)$.
\end{result}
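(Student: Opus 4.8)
The plan is to prove the three assertions — existence, uniqueness, and large time convergence — in that order, exploiting the fact that under (h1) and (\ref{mono2}) with $\chi<1$ the linear system (\ref{LE}) is a special case of the nonlinear system (\ref{E}): indeed $H_i(x,p,u_i,u_j)=h_i(x,p)+\lambda_{ii}(x)u_i+\lambda_{ij}(x)u_j$ satisfies (H1), and (H5) holds with the constant $\lambda_{ii}=\min_{x}\lambda_{ii}(x)>0$, while (H7) holds with $b_{ij}$ as in (\ref{bij}); coercivity (h1) is the relaxed condition (H8) referenced in Remark \ref{rem1}. Hence \emph{existence} of a solution $(v_1,v_2)$ of (\ref{LE}) follows immediately from Theorem \ref{M1} (or rather Proposition \ref{M1'}, since only coercivity is assumed). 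For \emph{uniqueness}, I would run a standard comparison argument adapted to the weak coupling: given two solutions $(v_1,v_2)$ and $(w_1,w_2)$, set $a_i:=\max_M(v_i-w_i)$ and let $i$ be an index achieving $\max\{a_1,a_2\}=:a$. Using (\ref{mono2}) one gets at a maximum point $x_0$ of $v_i-w_i$ that $\lambda_{ii}(x_0)a_i+\lambda_{ij}(x_0)a_j\le 0$ in the viscosity sense; after dividing by $\lambda_{ii}(x_0)$ this reads $a_i\le b_{ij}a_j\le b_{ij}a$. Chaining the same inequality for the other index gives $a\le b_{ij}b_{ji}\,a=\chi a$, and since $\chi<1$ we conclude $a\le 0$; symmetrically $\max_M(w_i-v_i)\le 0$, so $(v_1,v_2)=(w_1,w_2)$.

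For the large time behavior, the natural route is via the solution semigroup of (\ref{LCau}). First I would establish well-posedness and a comparison principle for (\ref{LCau}) (this is classical for linear weakly coupled systems under (h1) and continuity of the $\lambda_{ij}$; one can also cite the representation formula of \cite{JL2}). The key structural observation is that $(v_1,v_2)$ is a stationary solution, so it suffices to show $u_i(\cdot,t)-v_i\to 0$ uniformly. I would control the quantity $\rho(t):=\max_i\ \max_M\,(u_i(x,t)-v_i(x))$ (and its analogue $\tilde\rho(t)$ with the roles reversed). A formal computation at the point and index realizing $\rho(t)$ gives $\dot\rho(t)\le -(\lambda_{ii}(x_0)\rho(t)+\lambda_{ij}(x_0)\,(\text{value of }u_j-v_j\text{ at }x_0))$; bounding the last term below by $-|\lambda_{ij}(x_0)|\rho(t)\ge -\lambda_{ii}(x_0)b_{ij}\rho(t)$ yields $\dot\rho(t)\le -\lambda_{ii}(x_0)(1-b_{ij})\rho(t)$ whenever $\rho(t)>0$, whence $\rho$ decays (and one handles the possibility $\rho(t)<0$ via $\tilde\rho$ symmetrically). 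The cleanest rigorous version of this differential-inequality heuristic avoids differentiating $\rho$ by instead comparing $(u_1,u_2)$ with the explicit time-dependent super- and sub-solutions $v_i\pm K e^{-\gamma t}$ for suitable constants $K,\gamma>0$: one checks directly from (\ref{mono2}), $\chi<1$ and $\gamma$ small that $v_i+Ke^{-\gamma t}$ is a supersolution of (\ref{LCau}) dominating the initial data, and $v_i-Ke^{-\gamma t}$ a subsolution dominated by it; the comparison principle then squeezes $u_i$ between them and forces uniform (in fact exponential) convergence.

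The main obstacle is the large-time step, specifically making the coupled differential inequality for $\rho(t)$ rigorous: the map $t\mapsto\max_i\max_M(u_i(x,t)-v_i(x))$ is only Lipschitz, the maximizing index may switch, and the two indices are linked through the off-diagonal terms $\lambda_{ij}$, so the decay rate for index $1$ genuinely involves the value at index $2$. The $\chi<1$ hypothesis is exactly what closes this loop — it guarantees that after composing the two one-step contractions the product of off-diagonal-to-diagonal ratios is strictly less than $1$ — but turning this into a clean proof requires either the super/subsolution comparison device above (which sidesteps differentiating $\rho$ entirely) or a careful Rademacher-type argument tracking the active index. I expect the super/subsolution approach to be the shortest; verifying that $v_i\pm Ke^{-\gamma t}$ have the required sub/supersolution property is a short computation using (\ref{mono2}), (\ref{bij}) and $\chi<1$ to pick $\gamma\in(0,\min_i\min_M\lambda_{ii}(x)(1-b_{ij}))$, and choosing $K$ large enough that the bound holds at $t=0$ by continuity and compactness of $M$.
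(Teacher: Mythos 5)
Your route is genuinely different from the paper's in the large-time step, and it has a concrete gap there that needs fixing before it would close.

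On the first two parts: the existence argument (reduce (\ref{LE}) to the nonlinear system and invoke Proposition \ref{M1'}) is exactly the paper's; your uniqueness argument via the chain $a_1\le b_{12}a_2^+$, $a_2\le b_{21}a_1^+$, $a_1\le\chi a_1$ is a more self-contained alternative to the paper's citation of \cite[Prop.~1.3, Prop.~2.10]{Davi1} (to make it rigorous at the maximum point you still need a doubling-of-variables or semiconcavity argument, since the gradient of a touching test function need not vanish for both equations simultaneously; this is essentially what the cited result supplies).

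For the large-time part the paper does something structurally different: it first rescales $v_1\mapsto u_1/b_{12}$, which converts $\chi\le 1$ into the standard monotonicity $\sum_j\lambda_{ij}\ge 0$ of (\ref{stan}); it then proves equi-Lipschitz bounds (Lemma \ref{unibdd}) and shows that $\check\varphi_i=\liminf_{t\to\infty}u_i$ and $\hat\varphi_i=\limsup_{t\to\infty}u_i$ are, respectively, a supersolution and a subsolution of (\ref{LE}) via the monotone quantity $V_i(x,t)=\inf_{s\ge 0}u_i(x,t+s)$, and squeezes them against the unique stationary solution using Lemma \ref{stuni}. Your exponential-barrier plan is more direct and, if it worked, would even give an exponential rate, which the paper does not claim. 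But as written it fails: for $v_i+Ke^{-\gamma t}$ with a \emph{single} constant $K$ to be a supersolution you need $\lambda_{ii}(x)+\lambda_{ij}(x)\ge\gamma>0$ for each $i$, i.e.\ $b_{ij}<1$ for both $i$, and your suggested choice $\gamma\in\bigl(0,\min_i\min_M\lambda_{ii}(x)(1-b_{ij})\bigr)$ presupposes exactly that. The hypothesis $\chi=b_{12}b_{21}<1$ does not imply $b_{12}<1$ and $b_{21}<1$ (e.g.\ $b_{12}=2$, $b_{21}=1/3$), so the interval for $\gamma$ can be empty. The fix is to use component-dependent barriers $v_i+K_ie^{-\gamma t}$; the supersolution condition then reads $\lambda_{ii}(x)K_i-|\lambda_{ij}(x)|K_j\ge\gamma K_i$, which after dividing reduces to $b_{21}<K_2/K_1<1/b_{12}$, a nonempty interval precisely because $\chi<1$. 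Choosing the ratio $K_2/K_1$ in this interval and then scaling both $K_i$ up to dominate the initial data at $t=0$ closes the argument; this choice of ratio is exactly the rescaling $v_1\mapsto u_1/b_{12}$ that the paper performs at the start of Appendix \ref{sec3}, so once corrected your barrier is really the rescaled version of $\pm Ke^{-\gamma t}$ in a system satisfying (\ref{stan}).
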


\begin{remark}
The proof of Theorem \ref{M4} is a standard PDE argument, and is based on the comparison principle, we provide it in Appendix \ref{sec3}. When the coupling matrix $\Lambda(x)=(\lambda_{ij}(x))_{i,j\in\{1,2\}}$ is independent of $x\in M$, Theorem \ref{M4} implies the large time convergence when (h1)(\ref{mono2}) and \[\det\Lambda>0\] hold. The large time behavior of solutions corresponding to (\ref{=c}) has been considered in \cite{vis1,Cam2,Mit2,Ng}, where $\chi=1$. Consider the solution $u(x,t)$ of the single Hamilton-Jacobi equation
\begin{equation}\label{ssC}
  \left\{
   \begin{aligned}
   &\partial_t u(x,t)+H(x,D u(x,t),u(x,t))=0,\quad (x,t)\in M\times(0,+\infty).
   \\
   &u(x,0)=\varphi(x),\quad x\in M.
   \\
   \end{aligned}
   \right.
\end{equation}
When $H(x,p,u)$ is strictly increasing in $u$, then the solution $v(x)$ of (\ref{hjj}) is unique by \cite[Theorem 3.2]{inc}. By \cite{Su}, $u(x,t)$ uniformly converges to $v(x)$ as $t\to+\infty$ for all initial function $\varphi$. Theorem \ref{M4} generalizes these results to weakly coupled systems.
\end{remark}

Now we assume

\begin{itemize}
\item [\textbf{(h2)}] $h_i(x,p)$ is locally Lipschitz continuous.

\item [\textbf{(h3)}] $h_i(x,p)$ is strictly convex in $p$.
\end{itemize}
Assumptions (h2) and (h3) guarantee the semiconcavity of viscosity solutions. In the following, we assume that $\Lambda_1(x)$ and $\Lambda_2(x)$ are two positive functions on $M$, and are Lipschitz continuous.






The following result generalizes \cite[Theorem 2.12]{Davi1} for weakly coupled systems with two Hamilton-Jacobi equations. It also generalizes \cite[Theorem 1.3]{CGT} in the case where $\Lambda_i(x)$ depends on $x$. In this case, $\chi=1$.
\begin{result}\label{M3}
Assume (h1)(h2)(h3). There is a unique constant $\alpha(c)\in\mathbb R$ such that
\begin{equation}\label{CE}
  \left\{
   \begin{aligned}
   &h_1(x,Du_1(x))+\Lambda_1(x)(u_1(x)-u_2(x))=c,\\
   &h_2(x,Du_2(x))+\Lambda_2(x)(u_2(x)-u_1(x))=\alpha(c).\\
   \end{aligned}
   \right.
\end{equation}
admits viscosity solutions. The function $c\mapsto \alpha(c)$ is nonincreasing with the Lipschitz constant $\max_{x\in M}\Lambda_2(x)/\min_{x\in M}\Lambda_1(x)$.
\end{result}

The following result has been given by \cite[Theorem 1.3]{CGT}. We will prove it in Section \ref{Sec4} as a corollary of Theorem \ref{M3}.
\begin{corollary}\label{cons}
If $\Lambda_1(x)$ and $\Lambda_2(x)$ are constant functions, then $\alpha(c)=\alpha(0)-(\Lambda_2/\Lambda_1)c$.
\end{corollary}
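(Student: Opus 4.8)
The plan is to exploit the translation invariance that appears once $\Lambda_1$ and $\Lambda_2$ are constants. Suppose $(u_1,u_2)$ is a viscosity solution of (\ref{CE}) for parameters $(c,d)$. For any real number $t$, the pair $(u_1+t,u_2)$ has the same spatial gradients as $(u_1,u_2)$, so the terms $h_i(x,D\cdot)$ are unchanged while the coupling terms pick up constant shifts: the first equation becomes $h_1(x,Du_1)+\Lambda_1(u_1-u_2)+\Lambda_1 t=c+\Lambda_1 t$ and the second becomes $h_2(x,Du_2)+\Lambda_2(u_2-u_1)-\Lambda_2 t=d-\Lambda_2 t$. Adding a constant to one component of a viscosity solution again gives a viscosity solution, since test functions and their gradients are unaffected. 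Hence $(u_1+t,u_2)$ is a viscosity solution of (\ref{CE}) with parameters $(c+\Lambda_1 t,\ d-\Lambda_2 t)$.

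Next I would invoke Theorem \ref{M3}: for each real $c'$ there is a \emph{unique} constant $\alpha(c')$ for which (\ref{CE}) with left-hand constant $c'$ admits viscosity solutions. Given an arbitrary $c'$, choose $t=(c'-c)/\Lambda_1$. The displayed translation then produces a viscosity solution $(u_1+t,u_2)$ of (\ref{CE}) whose left-hand constant equals $c+\Lambda_1 t=c'$ and whose right-hand constant equals $d-\Lambda_2 t=d-(\Lambda_2/\Lambda_1)(c'-c)$. By the uniqueness part of Theorem \ref{M3}, this forces $\alpha(c')=\alpha(c)-(\Lambda_2/\Lambda_1)(c'-c)$.

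Finally, setting $c=0$ and relabelling $c'$ as $c$ yields $\alpha(c)=\alpha(0)-(\Lambda_2/\Lambda_1)c$, which is the assertion of Corollary \ref{cons}; as a byproduct this turns the Lipschitz bound $\max_{x\in M}\Lambda_2(x)/\min_{x\in M}\Lambda_1(x)$ from Theorem \ref{M3} into an exact slope $\Lambda_2/\Lambda_1$ in the constant-coefficient case. There is no substantive obstacle here: the argument is purely a shift, and the only points that need to be stated rather than computed are the stability of viscosity solutions under adding constants to a component and the well-definedness of $\alpha$, i.e.\ the uniqueness already established in Theorem \ref{M3}.
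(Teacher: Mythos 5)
Your proof is correct, but it follows a different route from the one the paper uses at the end of Section \ref{Sec4}. There, Corollary \ref{cons} is deduced at the level of the discounted approximation (\ref{Ee}): when $\Lambda_1,\Lambda_2$ are constants, the comparison argument of Step 2 gives the exact identity $u_2^{\varepsilon,c_1}-u_2^{\varepsilon,c_2}=K_2=\frac{\Lambda_2}{\varepsilon\Lambda_1}(c_1-c_2)$, hence $\varepsilon u_2^{\varepsilon,c_1}-\varepsilon u_2^{\varepsilon,c_2}=\frac{\Lambda_2}{\Lambda_1}(c_1-c_2)$ for every $\varepsilon$, and the linearity of $\alpha$ with slope $-\Lambda_2/\Lambda_1$ follows by letting $\varepsilon\to 0+$ along the sequences realizing $\alpha(c_1)$ and $\alpha(c_2)$. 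You instead work directly with the critical system (\ref{CE}), using only translation invariance: shifting one component by the constant $t=(c'-c)/\Lambda_1$ turns a solution for the pair of constants $(c,\alpha(c))$ into a solution for $(c',\alpha(c)-(\Lambda_2/\Lambda_1)(c'-c))$, and the uniqueness of $d=\alpha(c')$ from Theorem \ref{M3} then pins down the value. This is essentially the observation the paper records in the remark following Corollary \ref{H=c} (where the shift is applied to $u_2$, starting from the fixed point $c_0$ of $\alpha$), and it is the more elementary argument: it bypasses the $\varepsilon$-level computation and the limit passage entirely, requiring only $\Lambda_1>0$ so the shift is well defined and the uniqueness statement of Theorem \ref{M3}. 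What the paper's route buys is uniformity of method: the same discounted comparison with nonconstant $\Lambda_i$ yields the Lipschitz bound $\max_{x\in M}\Lambda_2(x)/\min_{x\in M}\Lambda_1(x)$ of Theorem \ref{M3}, whereas the pure shift argument is special to constant coefficients.
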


By the continuity, there is a unique constant $c_0\in\mathbb R$ such that $\alpha(c_0)=c_0$. We have the following result, which is covered by \cite[Theorem 2.12]{Davi1}.
\begin{corollary}\label{H=c}
There is a unique constant $c_0\in \mathbb R$ such that
\[h_i(x,Du_i(x))+\Lambda_i(x)(u_i(x)-u_j(x))=c_0,\quad x\in M,\ i, j\in\{1,2\},\ i\neq j\]
admits viscosity solutions.
\end{corollary}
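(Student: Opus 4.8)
The plan is to reduce Corollary \ref{H=c} to a scalar fixed point problem for the function $\alpha$ furnished by Theorem \ref{M3}. The key observation is that the system appearing in Corollary \ref{H=c} is nothing but the system (\ref{CE}) restricted to the diagonal $c=d$. Hence the existence of $c_0$ is equivalent to solvability of the one-dimensional equation $\alpha(c)=c$, and the uniqueness of $c_0$ will follow by combining uniqueness of the solution of this scalar equation with the uniqueness clause in Theorem \ref{M3}.

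First I would introduce $g(c):=\alpha(c)-c$ for $c\in\mathbb R$. By Theorem \ref{M3}, $\alpha$ is Lipschitz continuous and nonincreasing, so $g$ is continuous and, as the sum of a nonincreasing function and the strictly decreasing map $c\mapsto -c$, it is strictly decreasing on $\mathbb R$. Next I would verify that $g$ changes sign: since $\alpha$ is nonincreasing, $\alpha(c)\ge \alpha(0)$ for $c\le 0$ and $\alpha(c)\le \alpha(0)$ for $c\ge 0$, whence $g(c)\ge \alpha(0)-c\to+\infty$ as $c\to-\infty$ and $g(c)\le \alpha(0)-c\to-\infty$ as $c\to+\infty$. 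The intermediate value theorem then produces a zero of $g$, and strict monotonicity makes it unique; call it $c_0$. (The Lipschitz estimate of Theorem \ref{M3} is used here only for continuity of $\alpha$; the divergence of $g$ at $\pm\infty$ comes from monotonicity, which matters because the Lipschitz constant $\max_{x\in M}\Lambda_2(x)/\min_{x\in M}\Lambda_1(x)$ may exceed $1$.)

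To finish, I would observe that with $c=d:=c_0$ we have $d=\alpha(c_0)=\alpha(c)$, so Theorem \ref{M3} yields that (\ref{CE}) with $c=d=c_0$ admits viscosity solutions; this is exactly the system in Corollary \ref{H=c}. For uniqueness, if some $c_0'$ also renders that system solvable, then (\ref{CE}) with $c=d=c_0'$ has a viscosity solution, so the uniqueness part of Theorem \ref{M3} forces $c_0'=\alpha(c_0')$, i.e.\ $c_0'$ is a zero of $g$, hence $c_0'=c_0$. There is no real obstacle here: the statement is a soft corollary of Theorem \ref{M3}, and the only point requiring a little care is deriving the sign change of $g$ (equivalently, the coercivity $g(c)\to\pm\infty$) from the monotonicity of $\alpha$ rather than from its Lipschitz bound.
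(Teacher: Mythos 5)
Your proposal is correct and follows essentially the same route as the paper, which obtains $c_0$ as the unique fixed point of the continuous nonincreasing map $c\mapsto\alpha(c)$ from Theorem \ref{M3} and gets uniqueness from the uniqueness of $d=\alpha(c)$ there. You merely spell out the details the paper leaves implicit (strict decrease of $\alpha(c)-c$, the sign change via monotonicity, and the intermediate value theorem), which is a faithful elaboration rather than a different argument.
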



\subsection{Examples without monotonicity}

If the assumption (\ref{mono2}) does not hold, the existence of solutions of (\ref{LE}) with $\chi=1$ and the large time behavior of solutions of (\ref{LCau}) with $\chi<1$ remain unsolvable. In this section, several examples are provided showing the complexity of the non-monotone cases.
\begin{example}
Consider
\begin{equation}\label{exx}
  \left\{
   \begin{aligned}
   &h(x,Du_1(x))+u_1(x)+u_2(x)=c_1,\\
   &h(x,Du_2(x))+u_2(x)-u_1(x)=c_2.\\
   \end{aligned}
   \right.
\end{equation}
Then $\chi=1$ and (\ref{mono2}) fails. Let $u_0$ be the unique solution of $h(x,Du)+2u=0$. Then for each $(c_1,c_2)\in\mathbb R^2$, the pair $(u_0+\frac{c_1-c_2}{2},u_0+\frac{c_1+c_2}{2})$ is a solution of (\ref{exx}). Therefore, one cannot expect the uniqueness of $c_2$ for given $c_1$ as in Theorem \ref{M3} when $\chi=1$ and (\ref{mono2}) fails.
\end{example}

\begin{example}\label{ex1.2}
For single Hamilton-Jacobi equations, when $H(x,p,u)$ is strictly decreasing in $u$, the large time behavior of solutions can be complicated, see \cite{Wa5}. Here we give an example of time periodic solutions of weakly coupled systems where $\chi<1$ and (\ref{inde}) does not hold. Let $\mathbb S^1$ be the unit circle, and $(x,t)\in \mathbb S^1\times(0,+\infty)$. Let $k>0$, consider
\begin{equation}\label{peri}
\left\{
   \begin{aligned}
   &\partial_t u_1+H(x,Du_1,2u_1-u_2/k)=0,\\
   &\partial_t u_2+kH(x,Du_2/k,2u_2/k-u_1)=0.\\
   \end{aligned}
   \right.
\end{equation}
where $H(x,p,u)$ satisfies the assumptions in \cite{Wa5}, then $-\Theta\leq \partial H/\partial u\leq -\delta<0$. We have
\begin{align*}
&\lambda_{11}=2\delta,\quad \lambda_{12}=\Theta/k,\quad \lambda_{22}=2\delta/k,\quad \lambda_{21}=\Theta,\quad \chi=\frac{\lambda_{12}\lambda_{21}}{\lambda_{11}\lambda_{22}}=\frac{\Theta^2}{4\delta^2}.
\end{align*}
We take $\delta\leq \Theta<2\delta$, then $\chi<1$. According to \cite{Wa5}, there are infinitely many time periodic viscosity solutions of
\begin{equation}\label{per2}
  \partial_tu(x,t)+H(x,Du(x,t),u(x,t))=0,\quad (x,t)\in M\times(0,+\infty).
\end{equation}
Let $\varphi(x,t)$ be a non-trivial time periodic solution of (\ref{per2}), then $(u_1,u_2)=(\varphi,k\varphi)$ is a non-trivial time periodic solution of (\ref{peri}). Therefore, one cannot expect the large time convergence as in Theorem \ref{M4} when $\chi<1$ and (\ref{inde}) fails.
\end{example}

\begin{example}
When $\chi\in (1,+\infty]$, the large time behavior can be even more complicated. 
Let $\mathbb S^1$ be the unit circle, and $(x,t)\in \mathbb S^1\times(0,+\infty)$. Consider
\begin{equation*}
  \left\{
   \begin{aligned}
   &\partial_t u_1+|D u_1|^2+u^2_1-u_2-1=0,
   \\
   &\partial_t u_2+|D u_2|^2+u^2_2+u_1-1=0.
   \\
   \end{aligned}
   \right.
\end{equation*}
For the system above, $|\partial_{u_j}H_i/\partial_{u_i}H_i|=1/|2u_i|\to +\infty$ as $u_i\to 0$. The above system has the following time-periodic solution
\begin{equation*}
  \left\{
   \begin{aligned}
   &u_1=\sin(x+t),
   \\
   &u_2=\cos(x+t).
   \\
   \end{aligned}
   \right.
\end{equation*}
The two components of the non-trivial time periodic solution given in Example \ref{ex1.2} are essentially the same. In this example, the two components of the solution are essentially different. In addition, the author believes that the time periodic solutions of the weakly coupled Hamilton-Jacobi systems can describe the dynamic equilibrium of differential games with multiple types of players. For related topics, see for example \cite{Ni2}.
\end{example}




This paper is organized as follows. Theorem \ref{M1} is proved in Section \ref{Sec3}. Theorem \ref{M4} is proved in Appendix \ref{sec3}. Theorem \ref{M3} and Corollary \ref{cons} are proved in Section \ref{Sec4}. Appendix \ref{Sec2} provides some facts about viscosity solutions of single Hamilton-Jacobi equations depending Lipschitz continuously on the unknown function. These results are useful in the proof of Theorem \ref{M1}. 


\section{Proof of Theorem \ref{M1}}\label{Sec3}

In this section, we divide the proof of Theorem \ref{M1} into three different cases. We first relax the superlinearity condition (H2) to the coercivity condition (H8). Then we complete the proof of theorem \ref{M1} under (H2). In the following, we call \textbf{(I')} the conditions (H1)(H4)(H5)(H7)(H8), and \textbf{(D')} the conditions (H1)(H3)(H4)(H6)(H7)(H8). Since either (H5) or (H6) is assumed to be hold, the classical monotonicity condition ($\diamond$) can be thought to be replaced by a distinct type of monotonicity.

The strategy of the proof is as follows.
\begin{itemize}
\item Considering the single equation $H_1(x,Du,u,0)=0$, we can obtain a solution $u^0_1$.
\item Considering the single equation $H_2(x,Du,u,u^0_1(x))=0$, we can obtain a solution $u^1_2$.
\item Considering the single equation $H_1(x,Du,u,u^1_2(x))=0$, we can obtain a solution $u^1_1$.
\end{itemize}
Let this process continue. We get the following iteration procedure for $n=0,1,2,\dots$
\begin{equation*}\label{un}
  \left\{
   \begin{aligned}
   &H_1(x,Du^n_1(x),u^n_1(x),u^n_2(x))=0,\\
   &H_2(x,Du^{n+1}_2(x),u^{n+1}_2(x),u^n_1(x))=0,\\
   \end{aligned}
   \right.
\end{equation*}
where $u^0_2\equiv 0$. 
We are going to prove that there is a subsequence of $(u^n_1,u^n_2)$ converges uniformly to a pair $(u,v)$. By the stability of viscosity solutions \cite[Theorem 8.1.1]{Fat-b}, the limit $(u,v)$ is a solution of (\ref{E}).


Assume that $H_i:T^*M\times\mathbb R^2\to\mathbb R$ satisfies (H1)(H3)(H4)(H8) for each $i\in\{1,2\}$. The Lagrangian associated to $H_i(x,p,u_i,u_j)$ is defined by
\begin{equation}\label{Li}
  L_i(x,\dot x,u_i,u_j):=\sup_{p\in T^*_xM}\{\langle \dot x,p\rangle_x-H_i(x,p,u_i,u_j)\},
\end{equation}
where $\langle\cdot,\cdot\rangle_x$ represents the canonical pairing between the tangent space and cotangent space. Similar to \cite[Proposition 2.1]{Ish2}, one can prove the local boundedness of $L_i(x,\dot x,0,0)$:
\begin{lemma}\label{mu}
There exist constants $\delta>0$ and $C>0$ independent of $i$ such that for each $i\in\{1,2\}$, the corresponding Lagrangian $L_i(x,\dot x,0,0)$ satisfies
\[L_i(x,\dot x,0,0)\leq C,\quad \forall (x,\dot x)\in M\times\bar B(0,\delta).\]
Here $\bar B(0,\delta)$ the closed ball given by the norm $|\cdot|_x$ centered at $0$ with radius $\delta$. Define $\mu:=\textrm{diam}(M)/\delta$, where $\textrm{diam}(M)$ is the diameter of $M$.
\end{lemma}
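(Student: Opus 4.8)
The plan is to deduce from coercivity (H8) together with convexity (H3) that each $p\mapsto H_i(x,p,0,0)$ grows at least linearly in $|p|_x$, uniformly in $x\in M$ and in $i\in\{1,2\}$, and then to read off the bound on $L_i(x,\dot x,0,0)$ directly from the Fenchel-type formula \eqref{Li}.

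First I would establish the quantitative linear lower bound. Put $H^0:=\sup_{i\in\{1,2\},\,x\in M}|H_i(x,0,0,0)|$, which is finite by (H1) and compactness of $M$. By (H8) there is $R_0\ge 1$ such that $H_i(x,p,0,0)\ge 2H^0+1$ whenever $|p|_x\ge R_0$, for every $x\in M$ and $i\in\{1,2\}$. Fix such a pair $(x,p)$ and set $q:=R_0\,p/|p|_x$, so that $|q|_x=R_0$ and $q=\tfrac{R_0}{|p|_x}p+\bigl(1-\tfrac{R_0}{|p|_x}\bigr)\cdot 0$ is a convex combination of $p$ and $0$. Applying (H3) to $p\mapsto H_i(x,p,0,0)$ gives
\[
H_i(x,q,0,0)\ \le\ \tfrac{R_0}{|p|_x}H_i(x,p,0,0)+\bigl(1-\tfrac{R_0}{|p|_x}\bigr)H_i(x,0,0,0),
\]
and since $H_i(x,q,0,0)\ge 2H^0+1$ while $\bigl(1-\tfrac{R_0}{|p|_x}\bigr)H_i(x,0,0,0)\le H^0$, rearranging yields $H_i(x,p,0,0)\ge |p|_x/R_0$ for all $|p|_x\ge R_0$. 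On the complementary set $\{|p|_x\le R_0\}$ the quantity $H_i(x,p,0,0)$ is bounded below by $-C_1:=\inf\{H_i(x,p,0,0):\ i\in\{1,2\},\ x\in M,\ |p|_x\le R_0\}>-\infty$, again by (H1) and compactness. Together these give, with $c_0:=1/R_0$,
\[
H_i(x,p,0,0)\ \ge\ c_0|p|_x-(C_1+1),\qquad (x,p)\in T^*M,\ i\in\{1,2\}.
\]

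It then remains only to fix the constants in the statement. Set $\delta:=c_0/2$ and $C:=C_1+1$, and recall $\mu:=\textrm{diam}(M)/\delta$. For any $(x,\dot x)$ with $|\dot x|_x\le\delta$, the Cauchy--Schwarz inequality for the metric $g$ gives $\langle\dot x,p\rangle\le|\dot x|_x|p|_x\le\delta|p|_x$ for every $p\in T^*_xM$, so by \eqref{Li} and the linear lower bound just proved,
\[
L_i(x,\dot x,0,0)=\sup_{p\in T^*_xM}\bigl\{\langle\dot x,p\rangle-H_i(x,p,0,0)\bigr\}\ \le\ \sup_{p\in T^*_xM}\bigl\{(\delta-c_0)|p|_x+C\bigr\}=C,
\]
since $\delta-c_0=-c_0/2<0$. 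The same $\delta$ and $C$ work for both $i$, which is the assertion.

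The only genuinely substantial point is the first step. Plain coercivity --- $\inf_xH_i(x,p,0,0)\to+\infty$ --- does not by itself control $L_i$ on any ball around the origin (a Hamiltonian growing like $\log(1+|p|_x)$ would make $L_i\equiv+\infty$ off the origin), and it is convexity (H3) that promotes ``tends to $+\infty$'' to ``grows at least linearly'', which is exactly what renders the supremum in \eqref{Li} finite for small $|\dot x|_x$. Everything else --- finiteness of $H^0$ and $C_1$, and uniformity in $i$ --- is immediate from (H1), the compactness of $M$, and $i$ ranging over the two-element set $\{1,2\}$; this is the analogue for coercive convex Hamiltonians of \cite[Proposition 2.1]{Ish2}.
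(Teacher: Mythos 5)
Your approach is the standard one and is precisely what the reference cited by the paper, \cite[Proposition 2.1]{Ish2}, implements: convexity upgrades the coercivity of $H_i$ to a linear lower bound $H_i(x,p,0,0)\ge c_0|p|_x-K$, and such a bound makes the Legendre supremum in \eqref{Li} finite (indeed $\le K$) once $|\dot x|_x<c_0$. The lemma is stated in the paper without proof beyond this citation, so there is no alternative argument to compare against.

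One bookkeeping slip is worth fixing. In consolidating the two case estimates into the single inequality $H_i(x,p,0,0)\ge c_0|p|_x-(C_1+1)$, the case $|p|_x\ge R_0$ needs $c_0|p|_x\ge c_0|p|_x-(C_1+1)$, that is $C_1+1\ge 0$, which you have not secured and which can fail (for $H_i(x,p,0,0)=|p|_x^2+3$ one finds $-C_1=3$). The conclusion is nonetheless true, and there are two one-line repairs: either keep the factor you discarded, namely $H_i\ge(H^0+1)c_0|p|_x$ on $|p|_x\ge R_0$, and note that $-C_1\le H_i(x,0,0,0)\le H^0$ because $p=0$ belongs to the ball, so that $(H^0+1)c_0|p|_x=c_0|p|_x+H^0c_0|p|_x\ge c_0|p|_x+H^0\ge c_0|p|_x-C_1\ge c_0|p|_x-(C_1+1)$; or simply replace $C_1+1$ by $\max\{C_1+1,\,1\}$ throughout, which costs nothing in the final estimate and also guarantees $C>0$ as the statement of the lemma requires.
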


Let $\Theta$, $C$, $\mu$ and $b_{ij}$ be the constants defined in the basic assumptions (H4)(H7) and Lemma \ref{mu}. In this section, we define
\[\mathbb H_i:=\|H_i(x,0,0,0)\|_\infty,\quad A:=\Theta\mu e^{\Theta\mu},\quad B:=C\mu e^{\Theta\mu},\quad \bar b_{ij}:=(1+A)b_{ij}+A.
\]
and
\[
\kappa:=b_{12}b_{21},\quad \bar \kappa:=\bar b_{12}\bar b_{21},\quad \tilde{\kappa}:=b_{12}\bar b_{21}.\]

\subsection{Increasing-increasing case}\label{a}

In this section, we will prove
\begin{proposition}\label{proI-I}
The system (\ref{E}) admits viscosity solutions if $H_i(x,p,u_i,u_j)$ satisfies (I') for each $i\in\{1,2\}$, and $b_{12}b_{21}<1$.
\end{proposition}

\begin{lemma}\label{Iv}
For each $i\in\{1,2\}$ and $v(x)\in C(M)$, there is a unique viscosity solution $u(x)$ of
\begin{equation}\label{u0}
  H_i(x,Du,u,v(x))=0.
\end{equation}
Moreover, we have $\|u(x)\|_\infty\leq \frac{1}{\lambda_{ii}}\mathbb H_i+b_{ij}\|v\|_\infty$.
\end{lemma}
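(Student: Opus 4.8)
The plan is to reduce the system-with-frozen-coupling equation \eqref{u0} to a single Hamilton–Jacobi equation in the unknown $u$ alone and to invoke the well-posedness theory for single equations with Lipschitz dependence on $u$ (the results collected in Appendix \ref{Sec2}). Fix $i$ and $v\in C(M)$, and set $\widetilde H(x,p,u):=H_i(x,p,u,v(x))$. By (H1)/(H8) this $\widetilde H$ is continuous and coercive in $p$ uniformly in $x$ (note $v$ is bounded, so the constant shift between $\widetilde H(x,p,0)$ and $H_i(x,p,0,0)$ is controlled using (H4), preserving coercivity), and by (H5) it is strictly increasing in $u$ with $\widetilde H(x,p,u)-\widetilde H(x,p,w)\ge\lambda_{ii}(u-w)$ for $u\ge w$. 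These are exactly the hypotheses under which a single Hamilton–Jacobi equation $\widetilde H(x,Du,u)=0$ has a unique continuous (indeed Lipschitz) viscosity solution; I would cite the corresponding statement in Appendix \ref{Sec2}. (The convexity (H3) is not assumed here, consistent with the remark that it is unnecessary in the increasing case.) Existence in this strictly monotone setting can also be obtained directly by Perron's method once the a priori bound below supplies ordered sub/supersolutions, and uniqueness is the standard comparison argument using strict monotonicity in $u$.

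For the quantitative bound, the idea is that constants are sub/supersolutions. Let $M_0:=\tfrac1{\lambda_{ii}}\mathbb H_i+b_{ij}\|v\|_\infty$ and consider the constant function $u\equiv M_0$, with $Du=0$. I want to show $\widetilde H(x,0,M_0)\ge 0$ for all $x$, so that the constant $M_0$ is a supersolution. Writing, for fixed $x$,
\[
H_i(x,0,M_0,v(x)) = \big(H_i(x,0,M_0,v(x))-H_i(x,0,0,v(x))\big) + \big(H_i(x,0,0,v(x))-H_i(x,0,0,0)\big) + H_i(x,0,0,0),
\]
the first bracket is $\ge \lambda_{ii}M_0$ by (H5); the last term is $\ge -\mathbb H_i$ by definition of $\mathbb H_i$; and the middle term is estimated by \eqref{H5} in the second remark, namely
\[
|H_i(x,0,0,v(x))-H_i(x,0,0,0)|\le b_{ij}\,|H_i(x,0,0,v(x))-H_i(x,0,0,0)|\cdot\tfrac{|v(x)|}{|v(x)|},
\]
which after the correct manipulation of \eqref{H5} (using (H5) to bound the factor $H_i(x,0,u_i,v)-H_i(x,0,v_i,v)$ from below by $\lambda_{ii}$ times the increment, with $v(x)$ in place of $u_j$) yields $|H_i(x,0,0,v(x))-H_i(x,0,0,0)|\le b_{ij}\lambda_{ii}\|v\|_\infty$ — one has to be a little careful here, taking $u_i=v_i$ is degenerate, so instead apply \eqref{H5} with a generic increment and pass to the bound, or equivalently use the simpler Lipschitz-in-$u_j$ inequality from the remark when it applies. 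Combining, $H_i(x,0,M_0,v(x))\ge \lambda_{ii}M_0 - b_{ij}\lambda_{ii}\|v\|_\infty - \mathbb H_i = 0$. Symmetrically, $u\equiv -M_0$ is a subsolution. By the comparison principle for the single equation, the solution $u$ satisfies $-M_0\le u\le M_0$, i.e. $\|u\|_\infty\le M_0$.

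The main obstacle, and the only genuinely delicate point, is the middle-term estimate: translating the ratio condition (H7)/\eqref{H5}, which is phrased multiplicatively and with nonzero increments in both arguments, into the clean additive bound $|H_i(x,0,0,v(x))-H_i(x,0,0,0)|\le b_{ij}\lambda_{ii}\|v\|_\infty$ that makes the constants into sub/supersolutions. One must handle the degenerate cases $v(x)=0$ and the vanishing of the denominator in \eqref{H5} (where (H5) guarantees it is bounded below away from $0$ by $\lambda_{ii}|u_i-v_i|$), so the cleanest route is: apply \eqref{H5} with $u_j=v(x)$, $v_j=0$, and with $u_i>v_i$ arbitrary, divide by $(u_i-v_i)$, use (H5) on the right-hand factor, and let the construction be insensitive to the choice of $u_i,v_i$. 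Everything else — existence, uniqueness, the Lipschitz regularity — is a direct citation to the single-equation theory of Appendix \ref{Sec2}.
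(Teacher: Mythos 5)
Your global strategy---showing that the constants $\pm\bigl(\tfrac{1}{\lambda_{ii}}\mathbb H_i+b_{ij}\|v\|_\infty\bigr)$ are super/subsolutions of (\ref{u0}) and then concluding by Perron's method and comparison (no convexity needed in the increasing case)---is exactly the paper's route. But the step you yourself single out as delicate is carried out in a way that does not work, and the fix you sketch fails for a structural reason. From (\ref{H5}) with $u_j=v(x)$, $v_j=0$ you get, for every $u_i\neq v_i$ and every $v\in\mathbb R$,
\[
|H_i(x,0,0,v(x))-H_i(x,0,0,0)|\,|u_i-v_i|\;\le\; b_{ij}\,|H_i(x,0,u_i,v)-H_i(x,0,v_i,v)|\,|v(x)|.
\]
Condition (H5) bounds the increment on the right from \emph{below} by $\lambda_{ii}(u_i-v_i)$; since that increment sits on the larger side of the inequality, ``using (H5) on the right-hand factor'' only enlarges the right-hand side and yields nothing. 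The additive bound $|H_i(x,0,0,v(x))-H_i(x,0,0,0)|\le b_{ij}\lambda_{ii}\|v\|_\infty$ that your three-term decomposition requires is simply not a consequence of (H5)+(H7). For instance, $H_i(x,p,u_i,u_j)=|p|_x^2+2u_i-\sin u_j$ satisfies (H5) with the admissible choice $\lambda_{ii}=1$ and (H7) with $b_{ij}=1/2$, yet near $u_j=0$ the coupling has slope $1>b_{ij}\lambda_{ii}$; with $v\equiv\epsilon$ small and $M_0:=\mathbb H_i/\lambda_{ii}+b_{ij}\|v\|_\infty$, your chain of estimates gives only $H_i(x,0,M_0,v)\ge \lambda_{ii}M_0-\sin\epsilon-\mathbb H_i=\epsilon/2-\sin\epsilon<0$, so the argument does not close (even though the conclusion is true).

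The paper's device avoids the additive bound altogether: it compares the coupling variation with the \emph{actual} increment of $H_i$ in the $u_i$-slot over $[0,b_{ij}\|v\|_\infty]$ at the same frozen third argument $v(x)$. Applying (\ref{H5}) with $u_j=v(x)$, $v_j=0$, $u_i=b_{ij}\|v\|_\infty$, $v_i=0$, $v=v(x)$, and using $|v(x)|\le\|v\|_\infty$ together with the sign coming from (H5), one obtains
\[
|H_i(x,0,0,v(x))-H_i(x,0,0,0)|\;\le\; H_i(x,0,b_{ij}\|v\|_\infty,v(x))-H_i(x,0,0,v(x)),
\]
and the symmetric inequality with $-b_{ij}\|v\|_\infty$. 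Then one splits $H_i(x,0,M_0,v(x))$ into the increment from $b_{ij}\|v\|_\infty$ to $M_0$ (which is $\ge\mathbb H_i$ by (H5)), the increment from $0$ to $b_{ij}\|v\|_\infty$ (which absorbs the coupling term by the displayed inequality), and $H_i(x,0,0,0)\ge-\mathbb H_i$. In the example above this works precisely because the extra slack in the $u_i$-monotonicity (true slope $2$ versus the declared $\lambda_{ii}=1$) compensates for the coupling slope exceeding $b_{ij}\lambda_{ii}$---slack that your argument throws away by invoking (H5) too early. Incidentally, your displayed middle-term ``estimate'' with the factor $|v(x)|/|v(x)|$ is circular as written; the correct manipulation is the one just described. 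The remainder of your plan (Perron with these barriers, comparison for uniqueness and for the bound) coincides with the paper's proof.
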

\begin{proof}
By (H7) we have
\begin{equation*}
\begin{aligned}
  &|H_i(x,0,0,v(x))-H_i(x,0,0,0)|\cdot b_{ij}\|v\|_\infty
  \\ &\leq b_{ij}\bigl(H_i(x,0,b_{ij}\|v\|_\infty,v(x))-H_i(x,0,0,v(x))\bigl)|v(x)|
  \\ &\leq \bigl(H_i(x,0,b_{ij}\|v\|_\infty,v(x))-H_i(x,0,0,v(x))\bigl)\cdot b_{ij}\|v\|_\infty,\quad \forall x\in M.
\end{aligned}
\end{equation*}
Therefore, we have
\begin{equation}\label{i1}
  |H_i(x,0,0,v(x))-H_i(x,0,0,0)|\leq H_i(x,0,b_{ij}\|v\|_\infty,v(x))-H_i(x,0,0,v(x)).
\end{equation}
Similarly, we have
\begin{equation}\label{i2}
  |H_i(x,0,0,v(x))-H_i(x,0,0,0)|\leq H_i(x,0,0,v(x))-H_i(x,0,-b_{ij}\|v\|_\infty,v(x)).
\end{equation}
By (H5) and (\ref{i1}) we have
\begin{equation*}\label{H-H}
\begin{aligned}
  &H_i(x,0,\frac{1}{\lambda_{ii}}\mathbb H_i+b_{ij}\|v\|_\infty,v(x))-H_i(x,0,b_{ij}\|v\|_\infty,v(x))
  \\ &+H_i(x,0,b_{ij}\|v\|_\infty,v(x))-H_i(x,0,0,v(x))+H_i(x,0,0,v(x))-H_i(x,0,0,0)\geq \mathbb H_i.
\end{aligned}
\end{equation*}
By (H5) and (\ref{i2}) we have
\begin{equation*}
\begin{aligned}
  &H_i(x,0,0,0)-H_i(x,0,0,v(x))+H_i(x,0,0,v(x))-H_i(x,0,-b_{ij}\|v\|_\infty,v)
  \\ &+H_i(x,0,-b_{ij}\|v\|_\infty,v)-H_i(x,0,-\frac{1}{\lambda_{ii}}\mathbb H_i-b_{ij}\|v\|_\infty,v(x))
  \geq \mathbb H_i,
\end{aligned}
\end{equation*}
Thus for every $x\in M$, we have
\[H_i(x,0,\frac{1}{\lambda_{ii}}\mathbb H_i+b_{ij}\|v\|_\infty,v(x))\geq 0,\]
and \[H_i(x,0,-\frac{1}{\lambda_{ii}}\mathbb H_i-b_{ij}\|v\|_\infty,v(x))\leq 0,\]
which implies that $\frac{1}{\lambda_{ii}}\mathbb H_i+b_{ij}\|v\|_\infty$ (resp. $-\frac{1}{\lambda_{ii}}\mathbb H_i-b_{ij}\|v\|_\infty$) is a supersolution (resp. subsolution) of (\ref{u0}). Therefore, the viscosity solution $u(x)$ of (\ref{u0}) exists by Perron's method \cite{Ish}. The continuity of $u(x)$, 
the uniqueness of $u(x)$ and $\|u(x)\|_\infty\leq \frac{1}{\lambda_{ii}}\mathbb H_i+b_{ij}\|v\|_\infty$ are given by the comparison principle.
\end{proof}
\begin{lemma}\label{unvn}
For $n=1,2,\dots$, we have
\begin{equation*}
  \|u^n_1\|_\infty \leq \frac{\mathbb H_1}{\lambda_{11}}\sum_{l=0}^n\kappa^l+b_{12}\frac{\mathbb H_2}{\lambda_{22}}\sum_{l=0}^{n-1}\kappa^l,
\end{equation*}
and
\begin{equation*}
  \|u^{n+1}_2\|_\infty \leq \frac{\mathbb H_2}{\lambda_{22}}\sum_{l=0}^n\kappa^l+b_{21}\frac{\mathbb H_1}{\lambda_{11}}\sum_{l=0}^{n}\kappa^l.
\end{equation*}
\end{lemma}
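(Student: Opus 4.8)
The plan is to argue by a simultaneous induction on $n$, feeding the quantitative estimate of Lemma \ref{Iv} into the interleaved recursion (\ref{un}). First I would record that the iteration is well posed: given $u^n_2\in C(M)$, Lemma \ref{Iv} applied with $i=1$ and $v=u^n_2$ produces a unique $u^n_1$, and then Lemma \ref{Iv} with $i=2$ and $v=u^n_1$ produces a unique $u^{n+1}_2$; starting from $u^0_2\equiv 0$ this defines the pairs $(u^n_1,u^{n+1}_2)$ for all $n\geq 0$. The same lemma gives the two scalar inequalities
\[
\|u^n_1\|_\infty\leq \frac{\mathbb H_1}{\lambda_{11}}+b_{12}\|u^n_2\|_\infty,\qquad
\|u^{n+1}_2\|_\infty\leq \frac{\mathbb H_2}{\lambda_{22}}+b_{21}\|u^n_1\|_\infty,
\]
and these are the only properties of the system that will be used; note that the first bound involves $u^n_2$ while the second involves $u^n_1$, so the indices occurring in the two claimed estimates are genuinely different.

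Next I would set up the induction, using the convention that an empty sum (such as $\sum_{l=0}^{-1}\kappa^l$) equals $0$. For $n=0$, since $u^0_2\equiv 0$ the first inequality above reads $\|u^0_1\|_\infty\leq \mathbb H_1/\lambda_{11}$, which is exactly the claimed bound for $\|u^0_1\|_\infty$; substituting it into the second inequality gives $\|u^1_2\|_\infty\leq \mathbb H_2/\lambda_{22}+b_{21}\mathbb H_1/\lambda_{11}$, matching the claimed bound for $\|u^1_2\|_\infty$. For the inductive step, assume the stated bound for $\|u^n_1\|_\infty$. Substituting it into $\|u^{n+1}_2\|_\infty\leq \mathbb H_2/\lambda_{22}+b_{21}\|u^n_1\|_\infty$ and using $b_{21}b_{12}=\kappa$ merges the two geometric sums into $\sum_{l=0}^n\kappa^l$, which yields the claimed bound for $\|u^{n+1}_2\|_\infty$. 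Substituting that bound in turn into $\|u^{n+1}_1\|_\infty\leq \mathbb H_1/\lambda_{11}+b_{12}\|u^{n+1}_2\|_\infty$ and again using $b_{12}b_{21}=\kappa$ produces $\|u^{n+1}_1\|_\infty\leq \frac{\mathbb H_1}{\lambda_{11}}\sum_{l=0}^{n+1}\kappa^l+b_{12}\frac{\mathbb H_2}{\lambda_{22}}\sum_{l=0}^{n}\kappa^l$, which closes the induction.

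There is no essential obstacle here: the content is entirely the bookkeeping of the interleaved recursion and the telescoping of the two geometric series. The only points I would be careful about are to state the empty-sum convention explicitly so the $n=0$ case of the first estimate is unambiguous, and to keep straight which of $u^n_1,u^n_2$ feeds which equation in (\ref{un}). I would also remark that these estimates by themselves do not yet give a uniform-in-$n$ bound; that is precisely where the hypothesis $\kappa<1$ enters, after which the Arzel\`a--Ascoli and stability argument of Remark \ref{mm1} applies.
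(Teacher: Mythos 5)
Your proposal is correct and takes essentially the same approach as the paper: an induction driven by the quantitative bound of Lemma \ref{Iv} applied alternately to the two equations of the interleaved recursion, with the two geometric sums merging through $b_{12}b_{21}=\kappa$. The only cosmetic difference is that you anchor the induction at $n=0$ with the empty-sum convention, while the paper verifies $n=1$ by explicit computation and steps from $n=k-1$ to $n=k$; the arithmetic is identical.
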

\begin{proof}
We prove by induction. We first prove the Lemma when $n=1$. By Lemma \ref{Iv}, we have
\begin{equation*}
\begin{aligned}
  &\|u^0_1(x)\|_\infty\leq \frac{\mathbb H_1}{\lambda_{11}},
  \\ &\|u^1_2(x)\|_\infty\leq \frac{\mathbb H_2}{\lambda_{22}}+b_{21}\|u^0_1(x)\|_\infty \leq \frac{\mathbb H_2}{\lambda_{22}}+b_{21}\frac{\mathbb H_1}{\lambda_{11}},
  \\ &\|u^1_1(x)\|_\infty\leq \frac{\mathbb H_1}{\lambda_{11}}+b_{12}\|u^1_2(x)\|_\infty\leq \frac{\mathbb H_1}{\lambda_{11}}(1+\kappa)+b_{12}\frac{\mathbb H_2}{\lambda_{22}},
  \\ &\|u^2_2(x)\|_\infty\leq \frac{\mathbb H_2}{\lambda_{22}}+b_{21}\|u^1_1(x)\|_\infty\leq \frac{\mathbb H_2}{\lambda_{22}}(1+\kappa)+b_{21}\frac{\mathbb H_1}{\lambda_{11}}(1+\kappa).
\end{aligned}
\end{equation*}

Assume that the assertion holds true for $n=k-1$. We prove the Lemma when $n=k$. By Lemma \ref{Iv}, we have
\begin{equation*}
\begin{aligned}
  \|u^k_1(x)\|_\infty&\leq \frac{\mathbb H_1}{\lambda_{11}}+b_{12}\|u^k_2(x)\|_\infty
  \\ &\leq \frac{\mathbb H_1}{\lambda_{11}}(1+b_{12}b_{21}\sum_{l=0}^{k-1}\kappa^l)+b_{12}\frac{\mathbb H_2}{\lambda_{22}}\sum_{l=0}^{k-1}\kappa^l
  \\ &=\frac{\mathbb H_1}{\lambda_{11}}\sum_{l=0}^k\kappa^l+b_{12}\frac{\mathbb H_2}{\lambda_{22}}\sum_{l=0}^{k-1}\kappa^l,
\end{aligned}
\end{equation*}
and
\begin{equation*}
\begin{aligned}
  \|u^{k+1}_2(x)\|_\infty&\leq \frac{\mathbb H_2}{\lambda_{22}}+b_{21}\|u^k_1(x)\|_\infty
  \\ &\leq \frac{\mathbb H_2}{\lambda_{22}}(1+b_{21}b_{12}\sum_{l=0}^{k-1}\kappa^l)+b_{21}\frac{\mathbb H_1}{\lambda_{11}}\sum_{l=0}^k\kappa^l
  \\ &=\frac{\mathbb H_2}{\lambda_{22}}\sum_{l=0}^k\kappa^l+b_{21}\frac{\mathbb H_1}{\lambda_{11}}\sum_{l=0}^{k}\kappa^l.
\end{aligned}
\end{equation*}
The proof is now completed.
\end{proof}

\begin{lemma}\label{equisub}
Let $h:T^*M\to\mathbb R$ satisfy (h1). Given $c\in\mathbb R$. Then all u.s.c. viscosity subsolutions of
\begin{equation}\label{h=c}
  h(x,Du)=c,\quad x\in M.
\end{equation}
are equi-Lipschitz continuous.
\end{lemma}
\begin{proof}
Since the discussion is local, we assume that $M$ is an open bounded subset of $\mathbb R^n$. Let $w$ be an u.s.c. subsolution of (\ref{h=c}). By definition, for test function $\phi$ of class $C^1$, when $w-\phi$ attains its local maximum at $x$, we have $h(x,D\phi(x))\leq c$. By (h1), there is $\kappa>0$ independent of $x$ such that $\|D\phi(x)\|\leq \kappa$, where $\|\cdot\|$ is a norm in $\mathbb R^n$. Thus, $\|Dw(x)\|\leq \kappa$ holds in the viscosity sense. By \cite[Proposition 1.14]{Ish6}, $w$ is Lipschitz continuous with the Lipschitz constant $\kappa$.
\end{proof}

\noindent {\it Proof of Proposition \ref{proI-I}.} By assumption we have $\kappa<1$. By Lemma \ref{unvn}, both $u^n_1$ and $u^n_2$ are uniformly bounded by a constant $K>0$ independent of $n$. By (H4) we have
\begin{equation*}\label{<K}
  H_i(x,Du^n_i(x),0,0)\leq \Theta K
\end{equation*}
in the viscosity sense. By Lemma \ref{equisub}, $(u^n_1,u^n_2)$ is equi-Lipschitz continuous. By the Arzel\`a-Ascoli theorem, there is a subsequence of $(u^n_1,u^n_2)$ converges uniformly to a pair $(u,v)$. By the stability of viscosity solutions, the limit $(u,v)$ is a solution of (\ref{E}).\qed

\begin{proposition}\label{perronequ}
The existence of solution of (\ref{LE}) can be proved by Perron's method when (\ref{mono2}) and $b_{12}b_{21}<1$ hold. 
\end{proposition}
\begin{proof}
Taking $r>0$, we have
\begin{equation*}
  \left\{
   \begin{aligned}
   &h_1(x,rD(u_1/r))+\lambda_{11}(x)r(u_1/r)+\lambda_{12}(x)u_2=0,\\
   &h_2(x,Du_2)+\lambda_{22}(x)u_2+\lambda_{21}(x)r(u_1/r)=0.\\
   \end{aligned}
   \right.
\end{equation*}
Let $v_1=u_1/r$, then the pair $(v_1,u_2)$ satisfies
\begin{equation*}
  \left\{
   \begin{aligned}
   &h_1(x,rDv_1)+r\lambda_{11}(x)v_1+\lambda_{12}(x)u_2=0,\\
   &h_2(x,Du_2)+\lambda_{22}(x)u_2+r\lambda_{21}(x)v_1=0.\\
   \end{aligned}
   \right.
\end{equation*}
We take $r>b_{12}>0$. Then
\[r\lambda_{11}(x)+\lambda_{12}(x)>b_{12}\lambda_{11}(x)+\lambda_{12}(x)\geq 0.\]
By $\chi<1$, we also have
\[-r\frac{\lambda_{21}(x)}{\lambda_{22}(x)}\leq rb_{21}<1,\]
when $r$ is close to $b_{12}$. Then we get $\lambda_{22}(x)+r\lambda_{21}(x)>0$. Therefore, (\ref{perron}) holds for the pair $(v_1,u_2)$. The existence of $(v_1,u_2)$ implies the existence of $(u_1,u_2)$.
\end{proof}

In the following, we write $(u_1,u_2)\leq (v_1,v_2)$ (resp. $(u_1,u_2)\geq (v_1,v_2)$) if $u_1\leq v_1$ and $u_2\leq v_2$ (resp. $u_1\geq v_1$ and $u_2\geq v_2$). Same for $(u_1,u_2)<(v_1,v_2)$ and $(u_1,u_2)>(v_1,v_2)$.

By \cite[Proposition 2.10]{Davi1}, we have
\begin{lemma}\label{stuni}
Assume (\ref{mono2}) and $b_{12}b_{21}<1$. Let $(\tilde v_1,\tilde v_2)$ (resp. $(\bar v_1,\bar v_2)$) be a continuous subsolution (resp. supersolution) of (\ref{LE}), then $(\tilde v_1,\tilde v_2)\leq (\bar v_1,\bar v_2)$. Moreover, the solution of (\ref{LE}) is unique.
\end{lemma}


\subsection{Decreasing-decreasing case}\label{secd-d}

In this section, we will prove
\begin{proposition}\label{proD-D}
The system (\ref{E}) admits viscosity solutions if $H_i(x,p,u_i,u_j)$ satisfies (D') for each $i\in\{1,2\}$, and there is $\mu>0$ as mentioned in Lemma \ref{mu}, such that
\begin{equation}\label{bark<1}
  \left((1+\Theta\mu e^{\Theta\mu})b_{12}+\Theta\mu e^{\Theta\mu}\right)\left((1+\Theta\mu e^{\Theta\mu})b_{21}+\Theta\mu e^{\Theta\mu}\right)<1.
\end{equation}
\end{proposition}
If $H_i(x,p,u_1,u_2)$ satisfies (D'), the comparison principle does not hold. Therefore, one can not obtain similar results as in Lemma \ref{Iv} directly.

\begin{lemma}\label{dv}
For each $i\in\{1,2\}$ and $v(x)\in C(M)$, the viscosity solutions of (\ref{u0}) exist. For each viscosity solution $u(x)$ of (\ref{u0}), we have \[\|u(x)\|_\infty\leq (1+A)\frac{\mathbb H_i}{\lambda_{ii}}+\bar b_{ij}\|v\|_\infty+B.\]
\end{lemma}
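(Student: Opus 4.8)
The plan is to handle the equation \eqref{u0} with $H_i$ satisfying \textbf{(D')}, i.e.\ $H_i$ is coercive and convex in $p$ and strictly \emph{decreasing} in $u_i$. Since the comparison principle fails in this regime, I cannot copy the Perron-method argument of Lemma~\ref{Iv}; instead I would invoke the single-equation machinery collected in Appendix~\ref{Sec2} (and Proposition~\ref{m3} / the results of \cite{Ni}). Concretely, fix $v\in C(M)$ and treat \eqref{u0} as a single Hamilton--Jacobi equation $G(x,Du,u)=0$ with $G(x,p,w):=H_i(x,p,w,v(x))$, which is continuous, coercive and convex in $p$, and satisfies $G(x,p,w)-G(x,p,w')\le -\lambda_{ii}(w-w')$ for $w\ge w'$ by \textbf{(H6)}. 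The existence of a viscosity solution for such a decreasing-in-$u$ equation is exactly what the appendix provides (via the solution semigroup, hence the need for convexity (H3)); so the first step is to quote that existence result to get a solution $u$ of \eqref{u0}.

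The substantive part is the a priori bound $\|u\|_\infty\le (1+A)\frac{\mathbb H_i}{\lambda_{ii}}+\bar b_{ij}\|v\|_\infty+B$. Here $A=\Theta\mu e^{\Theta\mu}$, $B=C\mu e^{\Theta\mu}$, $\bar b_{ij}=(1+A)b_{ij}+A$, and $\mu$ is the constant from Lemma~\ref{mu}. I expect the mechanism to be: run the solution semigroup $T^t$ attached to $G$ for time $\mu$ and exploit that $T^\mu$ is a contraction-type operator whose displacement is controlled using (H4) (Lipschitz in $u_i,u_j$ with constant $\Theta$) and the coercivity-derived Lipschitz bound on solutions (yielding the constant $C$). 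A Gronwall estimate over the interval $[0,\mu]$ then produces the factors $e^{\Theta\mu}$, turning a crude bound $\tfrac{\mathbb H_i}{\lambda_{ii}}+b_{ij}\|v\|_\infty$ of the Lemma~\ref{Iv} type into the inflated bound with $A$ and $B$. The strict monotonicity (H6) with rate $\lambda_{ii}$ is what keeps the semigroup orbit from escaping: comparing $u$ with the constant supersolution/subsolution candidates $\pm(\tfrac{\mathbb H_i}{\lambda_{ii}}+b_{ij}\|v\|_\infty)$ — which bound $u$ in the \emph{increasing} case via \eqref{H5} exactly as in the proof of Lemma~\ref{Iv} — and then accounting for the one-sided error incurred because these are no longer true barriers (the sign of monotonicity is reversed) via the semigroup estimate. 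I would first reprove the inequalities \eqref{i1}--\eqref{i2} verbatim (they only use \eqref{H5}, which holds under (H7) regardless of the sign of monotonicity), obtain the ``formal'' barriers $w^\pm:=\pm(\tfrac{\mathbb H_i}{\lambda_{ii}}+b_{ij}\|v\|_\infty)$ satisfying $G(x,0,w^+)\ge 0\ge G(x,0,w^-)$, and then show that $u$ lies within distance $A\cdot(\text{something})+B$ of the interval $[w^-,w^+]$ by a one-step semigroup comparison.

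The key steps, in order: (i) rewrite \eqref{u0} as a single scalar HJ equation $G(x,Du,u)=0$ decreasing in $u$, and cite the appendix for existence of a viscosity solution; (ii) establish the ``formal barrier'' estimate $G(x,0,\pm(\tfrac{\mathbb H_i}{\lambda_{ii}}+b_{ij}\|v\|_\infty))\gtrless 0$ by repeating the computation \eqref{i1}--\eqref{H-H}; (iii) use Lemma~\ref{mu} to fix the time $\mu$ and run the solution semigroup of $G$ over $[0,\mu]$ starting from a true sub/supersolution of the associated \emph{time-dependent} problem; (iv) apply a Gronwall-type estimate using (H4) and the Lipschitz constant $C$ coming from coercivity (H8) to convert the barrier values into the bound $(1+A)\tfrac{\mathbb H_i}{\lambda_{ii}}+\bar b_{ij}\|v\|_\infty+B$. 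The main obstacle I anticipate is step~(iii)--(iv): making precise how the semigroup over a short time $\mu$ converts the failure of the comparison principle into exactly the claimed multiplicative factor $(1+A)$ and additive term $B$, i.e.\ getting the constants to match the definitions of $A$, $B$, $\bar b_{ij}$ rather than just some constant depending on $\mu,\Theta,C,\lambda_{ii}$ — this requires carefully tracking the interplay between the monotonicity rate $\lambda_{ii}$ in (H6), the Lipschitz rate $\Theta$ in (H4), and the a priori Lipschitz bound of solutions, presumably via the variational (Lax--Oleinik) representation of $T^t$ guaranteed by convexity and coercivity.
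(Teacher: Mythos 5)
Your overall instinct — pass through the single-equation machinery of Appendix~\ref{Sec2}, exploit convexity and Lemma~\ref{mu}, and produce the factors $e^{\Theta\mu}$ from a Gronwall estimate — is the right one, but the proposal leaves the crucial structural steps unstated and contains a sign error, and as written it does not yet close.

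The paper's route is: define the reflected Hamiltonian $F_i(x,p,u_i,u_j):=H_i(x,-p,-u_i,u_j)$. Under (H6) for $H_i$, the function $F_i$ is \emph{increasing} in $u_i$ with the same rate $\lambda_{ii}$, has $\|F_i(x,0,0,0)\|_\infty=\mathbb H_i$, and inherits (H7) with the same $b_{ij}$. So the genuine Perron barriers $\pm\bigl(\tfrac{\mathbb H_i}{\lambda_{ii}}+b_{ij}\|v\|_\infty\bigr)$ of Lemma~\ref{Iv} are barriers for $F_i(x,Du,u,v)=0$ (equation (\ref{F})), giving the unique backward weak KAM solution $u_-$ of (\ref{F}) with $\|u_-\|_\infty\le \tfrac{\mathbb H_i}{\lambda_{ii}}+b_{ij}\|v\|_\infty$. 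Existence of a solution of (\ref{u0}) then follows from Proposition~\ref{m3}. For the bound, Proposition~\ref{eqvvv} says that \emph{any} viscosity solution $u$ of (\ref{u0}) satisfies $u=-v_+$ for a forward weak KAM solution $v_+$ of (\ref{F}), and Proposition~\ref{u+} (whose proof is a Gronwall estimate along a geodesic of length $\le\mathrm{diam}(M)$ through a point of the nonempty Aubry set $\mathcal I_{v_+}$, using the calibration $v_+\prec L$) gives $u_-(y)-(C+\Theta\|v\|_\infty+\Theta\|u_-\|_\infty)\mu e^{\Theta\mu}\le v_+\le u_-$; this rearranges to $\|v_+\|_\infty\le(1+A)\|u_-\|_\infty+A\|v\|_\infty+B$, and substituting the bound for $\|u_-\|_\infty$ yields exactly $(1+A)\tfrac{\mathbb H_i}{\lambda_{ii}}+\bar b_{ij}\|v\|_\infty+B$.

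Two concrete gaps in your write-up. First, the sign in step (ii) is wrong: with $G:=H_i(\cdot,\cdot,\cdot,v)$ strictly \emph{decreasing} in $u$, the computation (\ref{i1})--(\ref{H-H}) gives $G(x,0,w^+)\le 0\le G(x,0,w^-)$, so $w^+$ is a subsolution above the supersolution $w^-$ and Perron does not apply; these numbers become usable barriers only after the reflection to $F_i$, which your proposal never performs. Second, steps (iii)--(iv) describe the \emph{content} of Proposition~\ref{u+} but not the mechanism that makes it close: you need (a) the identification $u=-v_+$ for $v_+$ a forward weak KAM solution (Proposition~\ref{eqvvv}), (b) the nonemptiness of the projected Aubry set $\mathcal I_{v_+}$ supplying an anchor point $y$ where $v_+(y)=u_-(y)$, and (c) a Gronwall estimate over a geodesic path of time length $\mu$ (not a semigroup run of time $\mu$ from a sub/supersolution — the semigroup enters only through the domination $v_+\prec L$). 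Without (a)--(b), a pure ``run the semigroup $\mu$ units of time'' argument has no uniform starting datum and the constants will not come out as $(1+A)$, $\bar b_{ij}$, $B$. These are exactly the ingredients your proposal flags as ``the main obstacle,'' and the resolution is already supplied by Propositions~\ref{eqvvv}, \ref{m3}, and~\ref{u+}; you should quote them rather than reconstruct them from scratch.
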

\begin{proof}
We first show the existence of viscosity solutions of (\ref{u0}). By Proposition \ref{m3}, we tern to consider the following equation
\begin{equation}\label{F}
F_i(x,Du,u,v(x))=0,
\end{equation}
where $F_i(x,p,u_i,u_j):=H_i(x,-p,-u_i,u_j)$. 
By the definition of $F_i$, one can easily check that $\|F_i(x,0,0,0)\|_\infty=\|H_i(x,0,0,0)\|_\infty$,
\[F_i(x,p,u_i,u)-F_i(x,p,v_i,u)
   \geq \lambda_{ii}(u_i-v_i),\quad \forall u_i\geq v_i,\ (x,p,u)\in T^*M\times \mathbb R,\]
and for all $(x,v)\in M\times\mathbb R$, $u_i\neq v_i,u_j\neq v_j\in\mathbb R$,
\begin{equation*}
\bigg|\frac{F_i(x,0,0,u_j)-F_i(x,0,0,v_j)}{u_j-v_j}\cdot \frac{u_i-v_i}{F_i(x,0,u_i,v)-F_i(x,0,v_i,v)}\bigg|\leq b_{ij}.
\end{equation*}
By Lemma \ref{Iv}, $\frac{1}{\lambda_{ii}}\mathbb H_i+b_{ij}\|v\|_\infty$ (resp. $-\frac{1}{\lambda_{ii}}\mathbb H_i-b_{ij}\|v\|_\infty$) is a supersolution (resp. subsolution) of (\ref{F}).
By Perron's method, the viscosity solution $u_-$ of (\ref{F}) exists, which implies the existence of viscosity solutions of (\ref{u0}) by Proposition \ref{m3}. Moreover, we have $\|u_-\|_\infty\leq \frac{1}{\lambda_{ii}}\mathbb H_i+b_{ij}\|v\|_\infty$. By Proposition \ref{u+}, we conclude that
\begin{equation*}
\begin{aligned}
  \|u(x)\|_\infty&=\|-v_+\|_\infty
  \leq (1+\Theta\mu e^{\Theta\mu})\|u_-\|_\infty+C\mu e^{\Theta\mu}+\Theta\mu e^{\Theta\mu}\|v(x)\|_\infty
  \\ &\leq (1+A)(\frac{1}{\lambda_{ii}}\mathbb H_i+b_{ij}\|v\|_\infty)+B+A\|v(x)\|_\infty
  =(1+A)\frac{\mathbb H_i}{\lambda_{ii}}+\bar b_{ij}\|v\|_\infty+B,
\end{aligned}
\end{equation*}
where $v_+$ is a forward weak KAM solution of (\ref{F}).
\end{proof}

\begin{lemma}\label{Dunvn}
For $n=1,2,\dots$, we have
\begin{equation*}
  \|u^n_1\|_\infty \leq (1+A)\frac{\mathbb H_1}{\lambda_{11}}\sum_{l=0}^n\bar \kappa^l+\bar b_{12}(1+A)\frac{\mathbb H_2}{\lambda_{22}}\sum_{l=0}^{n-1} \bar \kappa^l+(\sum_{l=0}^n\bar \kappa^l+\bar b_{12}\sum_{l=0}^{n-1}\bar \kappa^l)B,
\end{equation*}
and
\begin{equation*}
  \|u^{n+1}_2\|_\infty \leq (1+A)\frac{\mathbb H_2}{\lambda_{22}}\sum_{l=0}^n\bar \kappa^l+\bar b_{21}(1+A)\frac{\mathbb H_1}{\lambda_{11}}\sum_{l=0}^n \bar \kappa^l+(\sum_{l=0}^n\bar \kappa^l+\bar b_{21}\sum_{l=0}^n\bar \kappa^l)B.
\end{equation*}
\end{lemma}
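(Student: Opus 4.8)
The plan is to run the same induction on $n$ as in the proof of Lemma \ref{unvn}, with Lemma \ref{dv} replacing Lemma \ref{Iv}. The essential observation is that in Case (b) we have lost uniqueness and the comparison principle, but Lemma \ref{dv} still delivers the bound $\|u\|_\infty\le (1+A)\frac{\mathbb H_i}{\lambda_{ii}}+\bar b_{ij}\|v\|_\infty+B$ for \emph{every} viscosity solution $u$ of (\ref{u0}); hence the estimate propagates through the iteration (\ref{un}) no matter which viscosity solution is chosen at each stage.

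First I would write down the one-step estimates obtained by applying Lemma \ref{dv} to the two equations of (\ref{un}), namely
\[\|u^n_1\|_\infty\le (1+A)\frac{\mathbb H_1}{\lambda_{11}}+\bar b_{12}\|u^n_2\|_\infty+B,\qquad \|u^{n+1}_2\|_\infty\le (1+A)\frac{\mathbb H_2}{\lambda_{22}}+\bar b_{21}\|u^n_1\|_\infty+B,\]
together with the base case $\|u^0_1\|_\infty\le (1+A)\frac{\mathbb H_1}{\lambda_{11}}+B$, which follows from $u^0_2\equiv 0$. Since $u^n_2$ is the solution built from $u^{n-1}_1$, substituting the second inequality with $n$ replaced by $n-1$ into the first one reduces matters to the scalar recursion
\[\|u^n_1\|_\infty\le (1+A)\frac{\mathbb H_1}{\lambda_{11}}+\bar b_{12}(1+A)\frac{\mathbb H_2}{\lambda_{22}}+(1+\bar b_{12})B+\bar\kappa\,\|u^{n-1}_1\|_\infty ,\]
with $\bar\kappa=\bar b_{12}\bar b_{21}$.

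Then I would unwind this recursion: iterating it down to the base case, summing the resulting geometric progression in $\bar\kappa$, and using the elementary identity $1+\bar\kappa\sum_{l=0}^{n-1}\bar\kappa^l=\sum_{l=0}^{n}\bar\kappa^l$ to collect coefficients produces exactly the first claimed inequality. Plugging that bound on $\|u^n_1\|_\infty$ back into the one-step estimate for $\|u^{n+1}_2\|_\infty$ and again collecting powers of $\bar\kappa$ with the same identity gives the second one. The computation is pure bookkeeping; the only point that requires attention is the indexing — $u^n_2$ is determined by $u^{n-1}_1$ rather than by $u^n_1$ — which is precisely why the sums weighted by $\bar b_{12}$ run to $n-1$ while those weighted by $\bar b_{21}$ run to $n$. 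There is no genuine obstacle beyond keeping the two geometric sums straight, since all the analytic work is already packaged into Lemma \ref{dv} (and ultimately into Propositions \ref{eqvvv}, \ref{u+} and \ref{m3} used to prove it).
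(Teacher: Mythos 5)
Your proposal is correct and is essentially the paper's argument: both rest on the one-step bound from Lemma \ref{dv} applied to the two equations of the iteration (\ref{un}) (valid for every viscosity solution, so non-uniqueness in case (b) is harmless), with the correct indexing that $u^n_2$ is built from $u^{n-1}_1$. The only cosmetic difference is that you unwind a scalar recursion for $\|u^n_1\|_\infty$ and then feed it into the estimate for $\|u^{n+1}_2\|_\infty$, whereas the paper runs the same computation as a joint induction on the two sequences; the resulting bounds coincide.
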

\begin{proof}
We prove by induction. We first prove the Lemma when $n=1$. By Lemma \ref{dv}, we have
\begin{equation*}
\begin{aligned}
  \|u^0_1(x)\|_\infty&\leq (1+A)\frac{\mathbb H_1}{\lambda_{11}}+B,
  \\ \|u^1_2(x)\|_\infty&\leq (1+A)\frac{\mathbb H_2}{\lambda_{22}}+\bar b_{21}\|u^0_1(x)\|_\infty+B
  \\ &\leq (1+A)\frac{\mathbb H_2}{\lambda_{22}}+\bar b_{21}(1+A)\frac{\mathbb H_1}{\lambda_{11}}+(1+\bar b_{21}) B,
  \\ \|u^1_1(x)\|_\infty&\leq (1+A)\frac{\mathbb H_1}{\lambda_{11}}+\bar b_{12}\|u^1_2(x)\|_\infty+B
  \\ &\leq (1+A)\frac{\mathbb H_1}{\lambda_{11}}(1+\bar \kappa)+\bar b_{12}(1+A)\frac{\mathbb H_2}{\lambda_{22}}+(1+\bar \kappa+\bar b_{12})B,
  \\ \|u^2_2(x)\|_\infty&\leq (1+A)\frac{\mathbb H_2}{\lambda_{22}}+\bar b_{21}\|u^1_1(x)\|_\infty+B
  \\ &\leq (1+A)\frac{\mathbb H_2}{\lambda_{22}}(1+\bar \kappa)+\bar b_{21}(1+A)\frac{\mathbb H_1}{\lambda_{11}}(1+\bar \kappa)+(1+\bar \kappa+\bar b_{21}(1+\bar \kappa))B.
\end{aligned}
\end{equation*}

Assume that the assertion holds true for $n=k-1$. We prove the Lemma when $n=k$. By Lemma \ref{dv}, we have
\begin{equation*}
\begin{aligned}
  \|u^k_1(x)\|_\infty&\leq (1+A)\frac{\mathbb H_1}{\lambda_{11}}+\bar b_{12}\|u^k_2(x)\|_\infty+B
  \\ &\leq (1+A)\frac{\mathbb H_1}{\lambda_{11}}(1+\bar b_{12}\bar b_{21}\sum_{l=0}^{k-1}\bar \kappa^l)+\bar b_{12}(1+A)\frac{\mathbb H_2}{\lambda_{22}}\sum_{l=0}^{k-1}\kappa^l
  \\ &\quad +\bar b_{12}(\sum_{l=0}^{k-1}\bar \kappa^l+\bar b_{21}\sum_{l=0}^{k-1}\bar \kappa^l)B+B
  \\ &=(1+A)\frac{\mathbb H_1}{\lambda_{11}}\sum_{l=0}^k\bar \kappa^l+\bar b_{12}(1+A)\frac{\mathbb H_2}{\lambda_{22}}\sum_{l=0}^{k-1}\bar\kappa^l+(\sum_{l=0}^k\bar \kappa^l+\bar b_{12}\sum_{l=0}^{k-1}\bar \kappa^l)B,
\end{aligned}
\end{equation*}
and
\begin{equation*}
\begin{aligned}
  \|u^{k+1}_2(x)\|_\infty&\leq (1+A)\frac{\mathbb H_2}{\lambda_{22}}+\bar b_{21}\|u^k_1(x)\|_\infty+B
  \\ &\leq (1+A)\frac{\mathbb H_2}{\lambda_{22}}(1+\bar b_{21}\bar b_{12}\sum_{l=0}^{k-1}\bar\kappa^l)+\bar b_{21}(1+A)\frac{\mathbb H_1}{\lambda_{11}}\sum_{l=0}^k\kappa^l
  \\ &\quad +\bar b_{21}(\sum_{l=0}^k\bar \kappa^l+\bar b_{12}\sum_{l=0}^{k-1}\bar \kappa^l)B+B
  \\ &=(1+A)\frac{\mathbb H_2}{\lambda_{22}}\sum_{l=0}^k\bar\kappa^l+\bar b_{21}(1+A)\frac{\mathbb H_1}{\lambda_{11}}\sum_{l=0}^{k}\bar\kappa^l+(\sum_{l=0}^k\bar \kappa^l+\bar b_{21}\sum_{l=0}^k\bar \kappa^l)B.
\end{aligned}
\end{equation*}
The proof is now completed.
\end{proof}
By assumption we have $\bar\kappa<1$, both $u^n_1$ and $u^n_2$ are uniformly bounded with respect to $n$. Similar to the proof of Proposition \ref{proI-I}, there exists a viscosity solution of (\ref{E}). The proof of Proposition \ref{proD-D} is now complete.

It remains to prove the existence of solutions of (\ref{E}) where (H8) is replaced by (H2), and (\ref{bark<1}) is replaced by $b_{12}b_{21}<1$. If $H_i(x,p,u_i,u_j)$ satisfies (H2) instead of (H8), then $L_i(x,\dot x,u_i,u_j)$ is finite for all $\dot x\in T_x M$. If $b_{12}b_{21}<1$, one can take $\mu>0$ as mentioned in Lemma \ref{mu} sufficiently small such that (\ref{bark<1}) holds. The proof of Theorem \ref{M1} when two equations in (\ref{E}) are both decreasing in the unknown function is now complete.

\begin{remark}\label{rem2}
It is natural to ask if we can prove the existence of solutions of (\ref{E}) when $\chi<1$ and (H8) holds instead of (H2) by modification. Let $K>0$, define
\[H^K_i(x,p,u_i,u_j)=H_i(x,p,u_i,u_j)+\max\{|p|_x^2-K^2,0\}.\]
Denote by $(u^K_1,u^K_2)$ the solution of (\ref{E}) with $H_i$ equaling $H^K_i$. We hope that $(u^K_1,u^K_2)$ uniformly converges as $K\to+\infty$. However, the constant $B$ in Lemma \ref{Dunvn} is not uniformly bounded with respect to $K$ for small $\mu>0$. We cannot prove that $(u^K_1,u^K_2)$ is uniformly bounded.
\end{remark}

\subsection{Increasing-decreasing case}

Without any loss of generality, we assume that $H_1(x,p,u_1,u_2)$ satisfies (I') and $H_2(x,p,u_2,u_1)$ satisfies (D') in this section.
\begin{proposition}\label{proI-D}
The system (\ref{E}) admits viscosity solutions if there is $\mu>0$ as mentioned in Lemma \ref{mu} such that
\begin{equation}\label{tilk<1}
  b_{12}\left((1+\Theta\mu e^{\Theta\mu})b_{21}+\Theta\mu e^{\Theta\mu}\right)<1.
\end{equation}
\end{proposition}

\begin{lemma}\label{IDunvn}
For $n=1,2,\dots$, we have
\begin{equation*}
  \|u^n_1\|_\infty \leq \frac{\mathbb H_1}{\lambda_{11}}\sum_{l=0}^n\tilde\kappa^l+b_{12}(1+A)\frac{\mathbb H_2}{\lambda_{22}}\sum_{l=0}^{n-1} \tilde\kappa^l+b_{12}B\sum_{l=0}^{n-1}\tilde\kappa^l.
\end{equation*}
and
\begin{equation*}
  \|u^{n+1}_2\|_\infty \leq (1+A)\frac{\mathbb H_2}{\lambda_{22}}\sum_{l=0}^n\tilde\kappa^l+\bar b_{21}\frac{\mathbb H_1}{\lambda_{11}}\sum_{l=0}^n \tilde\kappa^l+B\sum_{l=0}^n\tilde\kappa^l.
\end{equation*}
\end{lemma}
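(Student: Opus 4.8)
The plan is to prove the two estimates simultaneously by induction on $n$, in exactly the spirit of the proofs of Lemma \ref{unvn} and Lemma \ref{Dunvn}; the only new feature of Case (c) is that the two equations in the iteration (\ref{un}) are now governed by Hamiltonians of different type, so I will alternately feed in the a priori bound coming from Lemma \ref{Iv} (for the first equation, whose Hamiltonian satisfies (I')) and the a priori bound coming from Lemma \ref{dv} (for the second equation, whose Hamiltonian satisfies (D')). Concretely, the only two facts about the iteration I intend to use are
\[
\|u^n_1\|_\infty\leq \frac{\mathbb H_1}{\lambda_{11}}+b_{12}\|u^n_2\|_\infty,\qquad
\|u^{n+1}_2\|_\infty\leq (1+A)\frac{\mathbb H_2}{\lambda_{22}}+\bar b_{21}\|u^n_1\|_\infty+B,
\]
the first from Lemma \ref{Iv} (applied to $H_1$ with $v=u^n_2$), the second from Lemma \ref{dv} (applied to $H_2$ with $v=u^n_1$), valid for \emph{any} admissible choice of $u^{n+1}_2$.

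For the base case $n=1$ I start from $u^0_2\equiv 0$ and chain the two estimates in the order $u^0_1\to u^1_2\to u^1_1\to u^2_2$, keeping track of the three groups of constants $\mathbb H_1/\lambda_{11}$, $(1+A)\mathbb H_2/\lambda_{22}$ and $B$. Each step multiplies the previous bound by $b_{12}$ or by $\bar b_{21}$, and since $b_{12}\bar b_{21}=\bar b_{21}b_{12}=\tilde\kappa$, the partial sums $1+\tilde\kappa+\dots$ assemble exactly into $\sum_{l=0}^1\tilde\kappa^l$ and $\sum_{l=0}^0\tilde\kappa^l$ as in the claimed formulas; a direct check confirms this. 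For the inductive step, assuming both displayed bounds hold with $n=k-1$, I first substitute the bound on $\|u^k_2\|_\infty$ — which is the ``second'' estimate at level $k-1$, since $u^k_2=u^{(k-1)+1}_2$ — into the first a priori estimate to get $\|u^k_1\|_\infty$, the geometric identity $1+\tilde\kappa\sum_{l=0}^{k-1}\tilde\kappa^l=\sum_{l=0}^{k}\tilde\kappa^l$ producing the right power sum; then I substitute the freshly obtained bound on $\|u^k_1\|_\infty$ into the second a priori estimate to get $\|u^{k+1}_2\|_\infty$, using the same telescoping identity once more. This closes the induction.

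There is no genuine obstacle here beyond careful bookkeeping of the three families of constants; the one point worth flagging is that, unlike in Case (a), the iterates $u^n_2$ need not be uniquely determined (Lemma \ref{dv} only bounds every solution), so the statement is to be read for every admissible sequence of iterates. Once the induction is complete, the hypothesis of Proposition \ref{M1'}(c), namely $b_{12}\bigl((1+A)b_{21}+A\bigr)=b_{12}\bar b_{21}=\tilde\kappa<1$, makes $\sum_{l=0}^{n}\tilde\kappa^l$ bounded uniformly in $n$, so both $\|u^n_1\|_\infty$ and $\|u^n_2\|_\infty$ are uniformly bounded; Remark \ref{mm1} then yields a viscosity solution of (\ref{E}), completing Case (c).
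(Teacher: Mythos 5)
Your proof is correct and follows exactly the route the paper intends: the paper omits the argument, remarking only that it is "quite similar" to Lemmas \ref{unvn} and \ref{Dunvn}, and your induction — alternating the a priori bound $\|u^n_1\|_\infty\leq \mathbb H_1/\lambda_{11}+b_{12}\|u^n_2\|_\infty$ from Lemma \ref{Iv} with $\|u^{n+1}_2\|_\infty\leq (1+A)\mathbb H_2/\lambda_{22}+\bar b_{21}\|u^n_1\|_\infty+B$ from Lemma \ref{dv}, so that the factor $b_{12}\bar b_{21}=\tilde\kappa$ assembles the geometric sums — is precisely that omitted proof. Your remark that the iterates $u^{n+1}_2$ need not be unique, so the bounds hold for any admissible choice, is a correct and worthwhile clarification consistent with Lemma \ref{dv}.
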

The proof of Lemma \ref{IDunvn} is quite similar to that of Lemmas \ref{unvn} and \ref{Dunvn}, we omit it here for brevity. By assumption we have $\tilde\kappa<1$, both $u^n_1$ and $u^n_2$ are uniformly bounded with respect to $n$. Similar to the proof of Proposition \ref{proI-I}, we can prove Proposition \ref{proI-D}.

It remains to prove the existence of solutions of (\ref{E}) where (H8) is replaced by (H2), and (\ref{tilk<1}) is replaced by $b_{12}b_{21}<1$. The proof is quite similar to that in Section \ref{secd-d}, based on the boundedness of $L_i(x,\dot x,u_i,u_j)$. 

\medskip

The proof of theorem \ref{M1} is now complete. 

\section{Proof of Theorem \ref{M3}}\label{Sec4}

The strategy of the proof of Theorem \ref{M3} is as follows. We use the vanishing discount method. Let $\varepsilon>0$, $c\in\mathbb R$ and $(u_1^\varepsilon,u_2^\varepsilon)$ be a viscosity solution of
\begin{equation}\label{Ee}
  \left\{
   \begin{aligned}
   &h_1(x,Du_1(x))+\Lambda_1(x)(u_1(x)-u_2(x))=c,\\
   &h_2(x,Du_2(x))+\varepsilon u_2(x)+\Lambda_2(x)(u_2(x)-u_1(x))=0.\\
   \end{aligned}
   \right.
\end{equation}
Proposition \ref{perronequ} and Lemma \ref{stuni} imply the existence and the uniqueness of the viscosity solution $(u_1^\varepsilon,u_2^\varepsilon)$ of (\ref{Ee}). We will prove that $\varepsilon u^\varepsilon_2(x)$ is uniformly bounded and $(u_1^\varepsilon,u_2^\varepsilon)$ is equi-Lipschitz continuous for all $\varepsilon>0$. Then for fixed $x_0\in M$, there exists a sequence $\varepsilon_k\rightarrow 0^+$ such that the pair $(u^{\varepsilon_k}_1(x)-u^{\varepsilon_k}_2(x_0),u^{\varepsilon_k}_2(x)-u^{\varepsilon_k}_2(x_0))$ uniformly converges to a viscosity solution of (\ref{CE}) and  $-\varepsilon_k u^{\varepsilon_k}_2(x_0)\rightarrow\alpha(c)\in\mathbb R$. 

\noindent \textbf{Step 1.} We first prove that for given $c\in\mathbb R$, there is a unique constant $\alpha(c)$ such that (\ref{CE}) admits viscosity solutions. Without any loss of generality, we may assume $c=0$ after a translation.
\begin{lemma}
The family $\{\varepsilon u^\varepsilon_2(x)\}_{\varepsilon>0}$ is uniformly bounded with respect to $\varepsilon$.
\end{lemma}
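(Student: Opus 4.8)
The plan is to bound $\varepsilon u_2^\varepsilon$ both above and below by constants independent of $\varepsilon$, using the structure of the system \eqref{Ee} together with the coercivity hypothesis (h1). First I would observe that the pair $(u_1^\varepsilon, u_2^\varepsilon)$ solves a weakly coupled system of the form to which Proposition \ref{M1'} applies (with a small discount $\varepsilon$ in the second equation), so existence and uniqueness are already in hand via Remark \ref{ue}; what remains is a quantitative estimate. The key is that $\varepsilon u_2^\varepsilon$ is governed by how large $u_2^\varepsilon$ can be relative to $u_1^\varepsilon$, and this in turn is controlled because the coupling ratio here is exactly $\chi = 1$ (the critical case), so the naive geometric-series bound of Lemma \ref{unvn} degenerates; the discount term $\varepsilon u_2^\varepsilon$ is precisely what restores compactness.

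For the \emph{upper bound}: let $x_0$ be a point where $u_2^\varepsilon$ attains its maximum. Since $u_2^\varepsilon - 0$ has a max at $x_0$, the viscosity supersolution inequality for the second equation of \eqref{Ee} with test function $\phi \equiv u_2^\varepsilon(x_0)$ gives $h_2(x_0, 0) + \varepsilon u_2^\varepsilon(x_0) + \Lambda_2(x_0)(u_2^\varepsilon(x_0) - u_1^\varepsilon(x_0)) \geq 0$. Similarly at a minimum point $x_1$ of $u_2^\varepsilon$ we get the reverse inequality. To close these, I need a companion bound relating $\|u_1^\varepsilon\|_\infty$ (or its values at $x_0, x_1$) to $\|u_2^\varepsilon\|_\infty$; the first equation of \eqref{Ee}, via the comparison-principle argument behind Lemma \ref{Iv} applied to the single equation $h_1(x, Du_1) + \Lambda_1(x) u_1 = c + \Lambda_1(x) u_2^\varepsilon$, yields $\|u_1^\varepsilon\|_\infty \leq C_1 + \|u_2^\varepsilon\|_\infty$ for a constant $C_1$ depending only on $h_1, c, \Lambda_1$ (here $b_{12} = \max \Lambda_1/\min\Lambda_1$... but in the critical normalization $\Lambda_i$ multiply $u_1 - u_2$, so the relevant ratio is $1$). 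Substituting $u_1^\varepsilon(x_0) \leq \|u_2^\varepsilon\|_\infty + C_1 = u_2^\varepsilon(x_0) + C_1$ into the supersolution inequality at $x_0$, the $\Lambda_2 u_2^\varepsilon$ terms cancel and we are left with $\varepsilon u_2^\varepsilon(x_0) \geq -h_2(x_0,0) - \Lambda_2(x_0) C_1 \geq -C_2$; arguing at the minimum point $x_1$ the other way gives $\varepsilon u_2^\varepsilon(x_1) \leq C_2$. Since $\varepsilon > 0$, $\varepsilon \min u_2^\varepsilon \leq \varepsilon u_2^\varepsilon \leq \varepsilon \max u_2^\varepsilon$, and the two extreme values are pinned between $-C_2$ and $C_2$ up to terms that vanish as one passes between max and min — so I must be slightly more careful: the cancellation of the $\Lambda_2$-term requires evaluating $u_1^\varepsilon$ and $u_2^\varepsilon$ at the \emph{same} point, which works at extrema of $u_2^\varepsilon$ because there $u_2^\varepsilon$ equals $\|u_2^\varepsilon\|_\infty$ (resp. $-\|u_2^\varepsilon\|_\infty$) and $u_1^\varepsilon$ is controlled by $\|u_2^\varepsilon\|_\infty$ from both sides. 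This gives $|\varepsilon u_2^\varepsilon(x_0)| \leq C_2$ and $|\varepsilon u_2^\varepsilon(x_1)| \leq C_2$, and since $\varepsilon u_2^\varepsilon(x_1) \leq \varepsilon u_2^\varepsilon(x) \leq \varepsilon u_2^\varepsilon(x_0)$ for all $x$ (when $u_2^\varepsilon \geq 0$; the sign cases split routinely), we conclude $\|\varepsilon u_2^\varepsilon\|_\infty \leq C_2$ uniformly.

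The step I expect to be the main obstacle is making the cancellation argument fully rigorous in the viscosity sense: extrema of $u_2^\varepsilon$ need not be points of differentiability, so I must phrase everything through constant test functions (which is legitimate — a constant is $C^1$), and I must ensure the one-sided bound $|u_1^\varepsilon(x_0) - u_2^\varepsilon(x_0)| \leq C_1$ is genuinely available at those specific points rather than merely in sup-norm. The cleanest route is: (i) prove $\|u_1^\varepsilon - u_2^\varepsilon\|_\infty$ is bounded uniformly in $\varepsilon$ \emph{first}, by treating the first equation as a single Hamilton-Jacobi equation for $w = u_1^\varepsilon$ with the forcing $\Lambda_1(u_2^\varepsilon - \text{const})$ absorbed, or more directly by the comparison principle applied to $u_1^\varepsilon$ against the constant sub/supersolutions $\pm(\|h_1(\cdot,0)\|_\infty + c)/\min\Lambda_1 + \|u_2^\varepsilon\|_\infty$ shifted appropriately — this is exactly Lemma \ref{Iv}'s mechanism and shows $\|u_1^\varepsilon\|_\infty \leq \|u_2^\varepsilon\|_\infty + C_1$ and $\|u_2^\varepsilon\|_\infty \leq \|u_1^\varepsilon\|_\infty + C_1'$ cannot both be used to bound the sup-norms (that would need $\chi < 1$), but the \emph{difference} bound at extremal points is enough; (ii) then run the constant-test-function argument at $\arg\max u_2^\varepsilon$ and $\arg\min u_2^\varepsilon$ as above. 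Once $\{\varepsilon u_2^\varepsilon\}$ is uniformly bounded, the subsequent steps of the paper (extracting a limit, identifying $\alpha(c)$) proceed by the standard vanishing-discount compactness, but those are outside the scope of this lemma.
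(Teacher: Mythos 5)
Your high-level strategy (bound $\varepsilon u_2^\varepsilon$ from above and below via the viscosity inequalities at extrema of $u_2^\varepsilon$, after killing the coupling term) is the right shape, but as written the proof has a sign error and a genuine gap at the crucial step.

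First, the signs are swapped. With the paper's convention, at a point $x_0$ where $u_2^\varepsilon$ attains its \emph{maximum} the constant test function gives the \emph{subsolution} inequality $h_2(x_0,0)+\varepsilon u_2^\varepsilon(x_0)+\Lambda_2(x_0)(u_2^\varepsilon(x_0)-u_1^\varepsilon(x_0))\leq 0$, not $\geq 0$; the $\geq 0$ supersolution inequality is what you get at a minimum. So your labels and the direction in which each bound comes out are flipped throughout.

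Second, and more seriously, the cancellation step is not justified. You need, at the same point $x_0$, a one-sided bound on $u_1^\varepsilon(x_0)-u_2^\varepsilon(x_0)$. You propose to get this from Lemma \ref{Iv}'s sup-norm estimate $\|u_1^\varepsilon\|_\infty\leq C_1+\|u_2^\varepsilon\|_\infty$, substituting $u_1^\varepsilon(x_0)\leq\|u_2^\varepsilon\|_\infty+C_1=u_2^\varepsilon(x_0)+C_1$. But $\|u_2^\varepsilon\|_\infty=u_2^\varepsilon(x_0)$ holds only if the maximum is nonnegative \emph{and} dominates $|\min u_2^\varepsilon|$; in general $\|u_2^\varepsilon\|_\infty$ may be $-\min u_2^\varepsilon$, and then the inequality you use does not follow. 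You acknowledge that the two sup-norm bounds cannot be combined (that would need $\chi<1$) and say ``the difference bound at extremal points is enough,'' but that difference bound is precisely what is never derived, and it cannot be extracted from the sup-norm estimate. The paper avoids this entirely: it evaluates the first equation at an extremal point $x_1$ of $u_1^\varepsilon$ (not of $u_2^\varepsilon$), uses that $u_1^\varepsilon(x_2)-u_2^\varepsilon(x_2)\geq u_1^\varepsilon(x_1)-u_2^\varepsilon(x_1)$ because $x_1,x_2$ are minimizers of $u_1^\varepsilon,u_2^\varepsilon$ respectively, and then multiplies by $\Lambda_2(x_2)/\Lambda_1(x_1)$ and adds so the coupling terms at $x_2$ cancel exactly; the analogous argument at near-maximizers handles the other direction. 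In this way no pointwise difference bound is ever needed. To repair your proof you would have to replace the appeal to the sup-norm bound of Lemma \ref{Iv} by this two-point cancellation trick. Note also that the paper invokes the semiconcavity of $u_i^\varepsilon$ (coming from (h2)(h3)) to work with differentiability points rather than constant test functions, in order to use $h_i(y_i,Du_i^\varepsilon(y_i))\geq\min h_i$; with constant test functions you would instead need $\|h_i(\cdot,0)\|_\infty$, which also works, but your writeup should be consistent about which version you use.
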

\begin{proof}
Let $x_1$ be a minimum point of $u^\varepsilon_1(x)$. Let $x_2$ be a minimum point of $u^\varepsilon_2(x)$. By Lemma \ref{equisub}, since $(u^\varepsilon_1,u^\varepsilon_2)$ is bounded for fixed $\varepsilon>0$, $u^\varepsilon_i$ is Lipschitz continuous. Then $u^\varepsilon_i$ is semiconcave for each $i\in\{1,2\}$ by \cite[Theorem 5.3.7]{sc}. Therefore, for each $i\in\{1,2\}$, $u^\varepsilon_i$ is differentiable at $x_i$, and $Du^\varepsilon_i(x_i)=0$. Plugging into (\ref{Ee}) we have
\begin{equation}\label{1}
   h_1(x_1,0)+\Lambda_1(x_1) (u^\varepsilon_1(x_2)-u^\varepsilon_2(x_2))
   \geq h_1(x_1,0)+\Lambda_1(x_1)( u^\varepsilon_1(x_1)-u^\varepsilon_2(x_1))=0,
\end{equation}
and
\begin{equation}\label{2}
  h_2(x_2,0)+\varepsilon u^\varepsilon_2(x_2)+\Lambda_2(x_2)(u^\varepsilon_2(x_2)-u^\varepsilon_1(x_2))=0.
\end{equation}
Multiplying (\ref{1}) with $\Lambda_2(x_2)/\Lambda_1(x_1)$ and adding with (\ref{2}), we get
\[\frac{\Lambda_2(x_2)}{\Lambda_1(x_1)}h_1(x_1,0)+h_2(x_2,0)+\varepsilon u^\varepsilon_2(x_2)\geq 0,\]
which implies
\[\varepsilon u^\varepsilon_2 (x)\geq \varepsilon u^\varepsilon_2 (x_2)\geq -(\iota\|h_1(x,0)\|_\infty+\|h_2(x,0)\|_\infty).\]
where $\iota:=\max_{x\in M}\Lambda_2(x)/\min_{x\in M}\Lambda_1(x)$.

Since both $u^\varepsilon_1$ and $u^\varepsilon_2$ are semiconcave, for all $\epsilon>0$ and each $i\in\{1,2\}$, there is a point $y_i$ such that $u^\varepsilon_i$ is differentiable at $y_i$, and
\begin{equation*}
  u^\varepsilon_i(y_i)\geq \max_{x\in M}u^\varepsilon_i(x)-\epsilon.
\end{equation*}
Plugging into (\ref{Ee}) we have
\begin{equation}\label{3}
   \begin{aligned}
   &h_1(y_1,Du^\varepsilon_1(y_1))+\Lambda_1(y_1)(u^\varepsilon_1(y_2)-u^\varepsilon_2(y_2))
   \\ &\leq h_1(y_1,Du^\varepsilon_1(y_1))+\Lambda_1(y_1) \max_{x\in M}u^\varepsilon_1(x)-\Lambda_1(y_1)(u^\varepsilon_2(y_1)-\epsilon)
   \\ &\leq h_1(y_1,Du^\varepsilon_1(y_1))+\Lambda_1(y_1)(u^\varepsilon_1(y_1)-u^\varepsilon_2(y_1))+2\Lambda_1(y_1) \epsilon=2\Lambda_1(y_1) \epsilon,
   \end{aligned}
\end{equation}
and
\begin{equation}\label{4}
  h_2(y_2,Du^\varepsilon_2(y_2))+\varepsilon u^\varepsilon_2(y_2)+\Lambda_2(y_2)(u^\varepsilon_2(y_2)-u^\varepsilon_1(y_2))=0.
\end{equation}
Multiplying (\ref{3}) with $\Lambda_2(y_2)/\Lambda_1(y_1)$ and adding with (\ref{4}), we get
\[\frac{\Lambda_2(y_2)}{\Lambda_1(y_1)}h_1(y_1,Du^\varepsilon_1(y_1))+h_2(y_2,Du^\varepsilon_2(y_2))+\varepsilon u^\varepsilon_2(y_2)\leq 2\Lambda_2(y_2) \epsilon,\]
which implies
\[\varepsilon u^\varepsilon_2(x)\leq \varepsilon \max_{x\in M} u^\varepsilon_2(x)\leq \iota\left|\min_{(x,p)\in T^*M}h_1(x,p)\right|+\left|\min_{(x,p)\in T^*M}h_2(x,p)\right|+(2\|\Lambda_2(x)\|_\infty+\varepsilon)\epsilon.\]
Let $\epsilon\rightarrow 0^+$, we get
\[\varepsilon u^\varepsilon_2(x)\leq \iota\left|\min_{(x,p)\in T^*M}h_1(x,p)\right|+\left|\min_{(x,p)\in T^*M}h_2(x,p)\right|.\]
By (h1), $\min_{(x,p)\in T^*M}h_i(x,p)$ is finite for each $i\in\{1,2\}$. Therefore, $\varepsilon u^\varepsilon_2(x)$ is uniformly bounded with respect to $\varepsilon$.
\end{proof}
\begin{lemma}
Both $u^\varepsilon_1$ and $u^\varepsilon_2$ are equi-Lipschitz continuous with respect to $\varepsilon$.
\end{lemma}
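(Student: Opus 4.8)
The plan is to bound the Lipschitz constants of $u_1^\varepsilon$ and $u_2^\varepsilon$ uniformly in $\varepsilon$ by freezing one component and treating each equation as a single Hamilton–Jacobi equation with an inhomogeneity that is already controlled. First I would use the previous lemma: since $\{\varepsilon u_2^\varepsilon\}$ is uniformly bounded, say $|\varepsilon u_2^\varepsilon(x)|\leq R$ for all $x$ and all $\varepsilon\in(0,1)$, the second equation of (\ref{Ee}) can be rewritten as
\begin{equation*}
  h_2(x,Du_2^\varepsilon(x))=-\varepsilon u_2^\varepsilon(x)-\Lambda_2(x)\bigl(u_2^\varepsilon(x)-u_1^\varepsilon(x)\bigr).
\end{equation*}
The key point is that the right-hand side does not involve the full magnitude of $u_2^\varepsilon$ through $\varepsilon u_2^\varepsilon$ (that term is bounded by $R$), but it does involve $\Lambda_2(x)(u_2^\varepsilon-u_1^\varepsilon)$. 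So I need an a priori bound on $\|u_1^\varepsilon-u_2^\varepsilon\|_\infty$, not just on $\varepsilon u_2^\varepsilon$.

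To get that, I would exploit the structure of the first equation, which has no discount term: $h_1(x,Du_1^\varepsilon)+\Lambda_1(x)(u_1^\varepsilon-u_2^\varepsilon)=c$. Evaluating at a maximum point $y_1$ of $u_1^\varepsilon-u_2^\varepsilon$ and a minimum point: at such points one does not directly get $Du_1^\varepsilon=0$, but using semiconcavity of $u_1^\varepsilon$ (from (h2)(h3)) together with the already-established fact (used in the previous lemma) that $u_i^\varepsilon$ is differentiable with zero gradient at its own minimum, I can compare values. More robustly, I would argue as follows: from $h_1(x,Du_1^\varepsilon)=c-\Lambda_1(x)(u_1^\varepsilon-u_2^\varepsilon)$ and coercivity (h1), a pointwise lower bound on $\Lambda_1(x)(u_1^\varepsilon-u_2^\varepsilon)$ (hence on $u_1^\varepsilon-u_2^\varepsilon$, since $\Lambda_1>0$ is bounded below) follows from $h_1\geq \min h_1>-\infty$; and an upper bound on $u_1^\varepsilon-u_2^\varepsilon$ at the maximum point of $u_1^\varepsilon-u_2^\varepsilon$ follows because there $u_1^\varepsilon$ has a supergradient, forcing (by semiconcavity and the standard trick already used in (\ref{3})) $h_1(y_1,\cdot)$ to be evaluated at a point where its value is controlled, while the second equation evaluated near the maximum of $u_2^\varepsilon-u_1^\varepsilon$ gives the reverse. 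In short, the same ``min/max point + semiconcavity + multiply by $\Lambda_2/\Lambda_1$ and add'' computation that proved the previous lemma also yields $\|u_1^\varepsilon-u_2^\varepsilon\|_\infty\leq C$ uniformly in $\varepsilon$; I would carry it out once more with the roles arranged to isolate the difference.

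With $\|\varepsilon u_2^\varepsilon\|_\infty\leq R$ and $\|u_1^\varepsilon-u_2^\varepsilon\|_\infty\leq C$ in hand, both equations of (\ref{Ee}) become single Hamilton–Jacobi equations $h_i(x,Du_i^\varepsilon(x))=g_i^\varepsilon(x)$ with $\|g_i^\varepsilon\|_\infty$ bounded uniformly in $\varepsilon$: indeed $g_1^\varepsilon=c-\Lambda_1(u_1^\varepsilon-u_2^\varepsilon)$ and $g_2^\varepsilon=-\varepsilon u_2^\varepsilon-\Lambda_2(u_2^\varepsilon-u_1^\varepsilon)$, both of bounded sup norm. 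Then the coercivity assumption (h1) gives a uniform bound on $|Du_i^\varepsilon|_x$ for a.e.\ $x$ (wherever $u_i^\varepsilon$ is differentiable, which is a.e.\ by Lipschitz regularity and at every point in the viscosity sense): if $|p|_x$ exceeds the level $K$ with $\inf_x h_i(x,p)>\sup_\varepsilon\|g_i^\varepsilon\|_\infty$ whenever $|p|_x>K$, then $h_i(x,p)>g_i^\varepsilon(x)$, contradicting the equation. Hence $u_1^\varepsilon$ and $u_2^\varepsilon$ are equi-Lipschitz with constant $K$ depending only on $c$, the data $h_i,\Lambda_i$, and the uniform bounds $R,C$ — but not on $\varepsilon$. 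The main obstacle I anticipate is the derivation of the uniform bound on $\|u_1^\varepsilon-u_2^\varepsilon\|_\infty$: unlike $\varepsilon u_2^\varepsilon$, the difference is not a quantity whose bound drops out of a single evaluation, and one must be careful that the semiconcavity/supergradient argument at the extremal points of the difference really closes, using that the first equation carries no discount so that its extremal-point inequality is an equality-type estimate rather than something that degrades as $\varepsilon\to0$.
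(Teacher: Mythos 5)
Your overall plan — reduce to two single Hamilton–Jacobi equations with uniformly bounded right-hand sides and then invoke coercivity — is sound, and your final step coincides with the paper's conclusion, but the intermediate estimate of $\|u_1^\varepsilon-u_2^\varepsilon\|_\infty$ is derived with the signs reversed and via machinery that is not needed. The paper bypasses that estimate entirely: it forms the combination $\frac{\Lambda_2(x)}{\Lambda_1(x)}\cdot(\text{first equation})+(\text{second equation})$, in which the coupling terms cancel exactly, leaving
\[\frac{\Lambda_2(x)}{\Lambda_1(x)}h_1(x,Du_1^\varepsilon)+h_2(x,Du_2^\varepsilon)=-\varepsilon u_2^\varepsilon(x);\]
since both $h_i$ have finite infima by (h1) and $\|\varepsilon u_2^\varepsilon\|_\infty$ is uniformly bounded by the previous lemma, each $h_i(x,Du_i^\varepsilon(x))$ is bounded from above uniformly in $\varepsilon$, and coercivity gives equi-Lipschitz continuity directly — no separate bound on the difference is needed. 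Regarding your route: from the first equation and $h_1\geq\min h_1$ one obtains an \emph{upper} bound $\Lambda_1(x)\bigl(u_1^\varepsilon-u_2^\varepsilon\bigr)\leq c-\min h_1$ almost everywhere, not a lower bound as you assert; the complementary estimate $\Lambda_2(x)\bigl(u_2^\varepsilon-u_1^\varepsilon\bigr)\leq\|\varepsilon u_2^\varepsilon\|_\infty-\min h_2$ comes from the second equation together with the previous lemma. Neither direction requires semiconcavity or the extremal-point device from (\ref{3}), which you flag as the ``main obstacle'' — that apparatus can be dispensed with entirely, and once the two one-sided bounds are corrected your argument closes. The paper's version is simply the tidier packaging of these same two one-sided estimates into a single identity, tidier precisely because the linear combination eliminates the coupling before any estimation takes place.
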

\begin{proof}
Multiplying the first equality of (\ref{Ee}) with $\Lambda_2(x)/\Lambda_1(x)$ and adding with the second equality of (\ref{Ee}), we get
\[\frac{\Lambda_2(x)}{\Lambda_1(x)}h_1(x,Du^\varepsilon_1(x))+h_2(x,Du^\varepsilon_2(x))=-\varepsilon u^\varepsilon_2(x),\]
which implies
\[h_1(x,Du^\varepsilon_1(x))\leq \tilde{\iota}\left(\iota\|h_1(x,0)\|_\infty+\|h_2(x,0)\|_\infty+\left|\min_{(x,p)\in T^*M}h_2(x,p)\right|\right),\]
and
\[h_2(x,Du^\varepsilon_2(x))\leq \iota\|h_1(x,0)\|_\infty+\|h_2(x,0)\|_\infty+\iota\left|\min_{(x,p)\in T^*M}h_1(x,p)\right|,\]
where $\tilde{\iota}:=\max_{x\in M}\Lambda_1(x)/\min_{x\in M}\Lambda_2(x)$.
Thus, both $u^\varepsilon_1$ and $u^\varepsilon_2$ are equi-Lipschitz continuous with respect to $\varepsilon$ according to Lemma \ref{equisub}.
\end{proof}

Fix $x_0\in M$ and define
\begin{equation*}
  \left\{
   \begin{aligned}
   &\tilde{u}^\varepsilon_1(x)=u^\varepsilon_1(x)-u^\varepsilon_2(x_0),\\
   &\tilde{u}^\varepsilon_2(x)=u^\varepsilon_2(x)-u^\varepsilon_2(x_0).\\
   \end{aligned}
   \right.
\end{equation*}
The pair $(\tilde{u}^\varepsilon_1,\tilde{u}^\varepsilon_2)$ satisfies
\begin{equation*}
  \left\{
   \begin{aligned}
   &h_1(x,D\tilde{u}^\varepsilon_1(x))+\Lambda_1(x)(\tilde{u}^\varepsilon_1(x)-\tilde{u}^\varepsilon_2(x))=0,\\
   &h_2(x,D\tilde{u}^\varepsilon_2(x))+\varepsilon\tilde{u}^\varepsilon_2(x)+\Lambda_2(x)(\tilde{u}^\varepsilon_1(x)-\tilde{u}^\varepsilon_2(x)) +\varepsilon u^\varepsilon_2(x_0)=0.\\
   \end{aligned}
   \right.
\end{equation*}
Since $\|Du^\varepsilon_1\|_\infty$ is uniformly bounded almost everywhere, according to the first equality of (\ref{Ee}), $\tilde{u}^\varepsilon_1(x_0)=u^\varepsilon_1(x_0)-u^\varepsilon_2(x_0)$ is uniformly bounded with respect to $\varepsilon$. We also have $\tilde{u}^\varepsilon_2(x_0)=0$. Thus, both $\tilde{u}^\varepsilon_1(x)$ and $\tilde{u}^\varepsilon_2(x)$ are uniformly bounded and equi-Lipschitz continuous with respect to $\varepsilon$. We have also proved that $-\varepsilon u^\varepsilon_2(x_0)$ is bounded. Therefore, there exists a sequence $\varepsilon_k\rightarrow 0^+$ such that the pair $(\tilde{u}^{\varepsilon_k}_1,\tilde{u}^{\varepsilon_k}_2)$ converges to $(u,v)$ uniformly, and $-\varepsilon_k u^{\varepsilon_k}_2(x_0)$ converges to a constant $d_0\in\mathbb R$. According to the stability of viscosity solutions, the limit $(u,v)$ is a viscosity solution of (\ref{CE}) with $c=0$ and $\alpha(c)=d_0$.

We now prove that $d_0$ is the unique constant such that (\ref{CE}) admits solutions. Here we still set $c=0$. Assume there are two different constants $d_1<d_2$ such that (\ref{CE}) has a solution $(u_1,u_2)$ (resp. $(v_1,v_2)$) with $\alpha(c)=d_1$ (resp. $\alpha(c)=d_2$). When $\varepsilon>0$ is small enough, we have $d_1+\varepsilon u_2\leq d_2+\varepsilon v_2$. Noticing that for $k\in\mathbb R$, the pair $(u_1+k,u_2+k)$ also solves (\ref{CE}) with $c=0$ and $\alpha(c)=d_1$. Let $k>0$ large enough, we may assume $(v_1,v_2)<(u_1,u_2)$. We get
\begin{equation*}
\left\{
\begin{aligned}
&h_1(x,Du_1)+\Lambda_1(x) (u_1-u_2)=0,
\\ &h_2(x,Du_2)+\varepsilon u_2+\Lambda_2(x)(u_2-u_1)=d_1+\varepsilon u_2\leq d_2+\varepsilon v_2.
\end{aligned}
\right.
\end{equation*}
According to Lemma \ref{stuni}, we have $(u_1,u_2)\leq (v_1,v_2)$, which contradicts $(v_1,v_2)<(u_1,u_2)$.

\noindent \textbf{Step 2.} We now prove the properties of the function $\alpha(c)$. Let $(u_1^{\varepsilon,c},u_2^{\varepsilon,c})$ be the viscosity solution of (\ref{Ee}). We take $c_1>c_2$. By Lemma \ref{stuni}, $u^{\varepsilon,c_1}_2\geq u^{\varepsilon,c_2}_2$. By Step 1, $\alpha(c)$ is the limit of a converging sequence $-\varepsilon_k u^{\varepsilon_k,c}_2(x)$. Therefore, we have $\alpha(c_1)\leq \alpha(c_2)$. Define
\[K_1:=\frac{\max_{x\in M}\Lambda_2(x)+\varepsilon}{\varepsilon\min_{x\in M}\Lambda_1(x)}(c_1-c_2),\quad K_2:=\frac{\max_{x\in M}\Lambda_2(x)}{\varepsilon\min_{x\in M}\Lambda_1(x)}(c_1-c_2).\]
Then we have
\begin{equation*}
  \left\{
   \begin{aligned}
   &h_1(x,Du^{\varepsilon,c_1}_1)+\Lambda_1(x)\bigl((u^{\varepsilon,c_1}_1-K_1)-(u^{\varepsilon,c_1}_2-K_2)\bigl)=c_1+\Lambda_1(x)(K_2-K_1)\leq c_2,\\
   &h_2(x,u^{\varepsilon,c_1}_2)+(\Lambda_2(x)+\varepsilon)(u^{\varepsilon,c_1}_2-K_2)-\Lambda_2(x)(u^{\varepsilon,c_1}_1-K_1)=\Lambda_2(x)(K_1-K_2)-\varepsilon K_2\leq 0.\\
   \end{aligned}
   \right.
\end{equation*}
which implies $u^{\varepsilon,c_1}_2-u^{\varepsilon,c_2}_2\leq K_2$ by Lemma \ref{stuni}. Consider the following identity
\[\alpha(c_1)-\alpha(c_2)=[\alpha(c_1)+\varepsilon u^{\varepsilon,c_1}_2]+[\varepsilon u^{\varepsilon,c_2}_2-\varepsilon u^{\varepsilon,c_1}_2]-[\varepsilon u^{\varepsilon,c_2}_2+\alpha(c_2)].\]
Since both $\varepsilon u^{\varepsilon,c_1}_2$ and $\varepsilon u^{\varepsilon,c_2}_2$ are sequentially compact, there is a sequence $\varepsilon_k\rightarrow 0^+$ such that $\varepsilon_k u^{\varepsilon_k,c_1}_2$ tends to $-\alpha(c_1)$ and $\varepsilon_k u^{\varepsilon_k,c_2}_2$ tends to $-\alpha(c_2)$. For each $\varepsilon>0$, we have \[\varepsilon u^{\varepsilon,c_2}_2-\varepsilon u^{\varepsilon,c_1}_2\geq -\varepsilon K_2=-\frac{\max_{x\in M}\Lambda_2(x)}{\min_{x\in M}\Lambda_1(x)}(c_1-c_2).\]
We conclude that
\[-\frac{\max_{x\in M}\Lambda_2(x)}{\min_{x\in M}\Lambda_1(x)}(c_1-c_2)\leq \alpha(c_1)-\alpha(c_2)\leq 0,\quad \forall c_1>c_2.\]

At last, we prove Corollary \ref{cons}. When $\Lambda_1(x)$ and $\Lambda_2(x)$ are constants, we have $u^{\varepsilon,c_1}_2-u^{\varepsilon,c_2}_2=K_2$ with $K_2=\frac{\Lambda_2}{\varepsilon\Lambda_1}(c_1-c_2)$. Therefore, the function $\alpha(c)$ is linear, with the slope $-\Lambda_2/\Lambda_1$.

\begin{remark}
In the proof of Theorem \ref{M3}, we showed that there is a sequence $\varepsilon_k\rightarrow 0^+$ such that $(\tilde{u}_1^{\varepsilon_k},\tilde{u}_2^{\varepsilon_k})$ converges. Without any loss of generality, we assume $c=\alpha(c)=0$ up to a translation. Let $(u_1^\varepsilon,u_2^\varepsilon)$ be the viscosity solution of (\ref{Ee}) with $c=0$. It is natural to ask whether $(u_1^\varepsilon,u_2^\varepsilon)$ converges to a viscosity solution of (\ref{CE}) uniformly as $\varepsilon\rightarrow 0^+$. The situation here is different from that in the previous works on the vanishing discount problem. The discounted term $\varepsilon u_2$ only appears in the second equation. Since we are not dealing with the vanishing discount problem in this paper, we will not discuss this problem here.
\end{remark}

\section*{Acknowledgements}

The author would like to thank Professor Jun Yan for many helpful discussions.

\textbf{Conflict of interest:} The author states no conflict of interest.

\appendix

\section{Single Hamilton-Jacobi equations}\label{Sec2}

Assume that $H_i:T^*M\times\mathbb R^2\to\mathbb R$ satisfies (H1)(H3)(H4)(H8) for each $i\in\{1,2\}$. Taking $v\in C(M)$. We collect some facts given by \cite{Ni} in view of the Hamiltonian defined by \[H(x,p,u):=H_i(x,p,u,v(x))\] for sake of completeness. In fact, if $H_i(x,p,u_i,u_j)$ satisfies (H1)(H3)(H4)(H8), then $H(x,p,u)$ satisfies the basic assumptions in \cite{Ni}, i.e. continuous in $(x,p,u)$, convex and coercive in $p$, and uniformly Lipschitz continuous in $u$. Correspondingly, one has the Lagrangian associated to $H$:
\begin{equation*}
  L(x,\dot x,u):=\sup_{p\in T^*_xM}\{\langle \dot x,p\rangle_x-H(x,p,u)\}.
\end{equation*}
\begin{remark}\label{C'}
The Lagrangian $L(x,\dot x,u)$ equals to $L_i(x,\dot x,u,v(x))$, where $L_i$ is defined in (\ref{Li}). One can check that $L_i(x,\dot x,u_i,u_j)$ is also uniformly Lipschitz continuous in $u_j$ with the Lipschitz constant $\Theta$. Let $\delta>0$ and $C>0$ be the constants defined in Lemma \ref{mu}, then
\[L(x,\dot x,0)=L_i(x,\dot x,0,v(x))\leq C+\Theta\|v(x)\|_\infty,\quad \forall (x,\dot x)\in M\times\bar B(0,\delta).\]
\end{remark}
\begin{proposition}\cite[Theorem 1]{Ni}\label{m1}
The backward solution semigroup
\begin{equation*}\label{T-}
  T^-_t\varphi(x)=\inf_{\gamma(t)=x} \left\{\varphi(\gamma(0))+\int_0^tL(\gamma(\tau),\dot{\gamma}(\tau),T^-_\tau\varphi(\gamma(\tau)))\textrm{d}\tau\right\}
\end{equation*}
is well-defined. The infimum is taken among absolutely continuous curves $\gamma:[0,t]\rightarrow M$ with $\gamma(t)=x$. If  $\varphi$ is  continuous, then $u(x,t):=T^-_t\varphi(x)$ represents the unique continuous viscosity solution of (\ref{ssC}). Define the forward solution semigroup by $T^+_t\varphi:=-\bar T^-_t(-\varphi)$, where $\bar T^-_t$ be the backward solution semigroup corresponding to $L(x,-\dot x,-u)$. 
\end{proposition}

Following Fathi \cite{Fat-b}, we introduce the forward weak KAM solutions. 
\begin{definition}\label{bws}
A function $u_+\in C(M)$ is called a forward weak KAM solution of (\ref{hjj}) if $-u_+$ is a viscosity solution of
\begin{equation}\label{FE}
H(x,-Du(x),-u(x))=0.
\end{equation}
\end{definition}

\begin{proposition}\cite[Theorem 3]{Ni}\label{m3}
Assume there is a viscosity solution $u_-$ of (\ref{hjj}), then $T^+_t u_-$ is nonincreasing in $t$, and converges to $u_+$ uniformly. Thus, the existence of viscosity solutions of (\ref{FE}) is equivalent to the existence of viscosity solutions of (\ref{hjj}). Moreover, the projected Aubry set with respect to $u_-$
\[\mathcal I_{u_-}:=\{x\in M:\ u_-=\lim_{t\to+\infty}T^+_tu_-\}\]
is nonempty.
\end{proposition}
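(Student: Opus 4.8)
The plan is to run the implicit weak KAM machinery of \cite{Ni} for the conjugate pair (\ref{hj})--(\ref{FE}), exploiting the identity $T^+_t=-\bar T^-_t(-\,\cdot\,)$, the variational formula (\ref{T+}), and the fixed-point characterization in Proposition \ref{eqvvv}. Throughout, $u_-$ denotes the given viscosity solution of (\ref{hj}); by Proposition \ref{eqvvv} it is a fixed point of $T^-_t$ and a backward weak KAM solution, so $u_-\prec L$ and for each $x$ there is a $(u_-,L,0)$-calibrated curve ending at $x$. The goal is to show that $T^+_t u_-$ decreases uniformly to a fixed point $u_+$ of $T^+_t$, to read the equivalence of solvability off the symmetry $H(x,p,u)=F(x,-p,-u)$, and finally to locate a point of $\mathcal I_{u_-}$.

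\emph{Step 1 (monotonicity).} Everything reduces to the inequality $T^+_s u_-\le u_-$ for all $s\ge 0$; once it holds, the semigroup property and order-preservation of $T^+_t$ give $T^+_{t+s}u_-=T^+_t(T^+_s u_-)\le T^+_t u_-$. To get it, I would check that $-u_-$ is a viscosity subsolution of (\ref{FE}): since $u_-$ solves the convex, coercive equation (\ref{hj}) it is locally semiconcave, so whenever a $C^1$ function touches $u_-$ from below at a point, $u_-$ is differentiable there; hence, if $(-u_-)-\phi$ has a local maximum at $x_0$, then $u_--(-\phi)$ has a local minimum at $x_0$, forcing $u_-$ to be differentiable at $x_0$ with $Du_-(x_0)=-D\phi(x_0)$, and $F(x_0,D\phi(x_0),-u_-(x_0))=H(x_0,Du_-(x_0),u_-(x_0))=0$. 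A stationary subsolution of (\ref{FE}) increases under $\bar T^-_t$ (compare it, as a $t$-independent subsolution, with the solution of the Cauchy problem for $F$ with the same initial datum), i.e. $\bar T^-_s(-u_-)\ge -u_-$, which is exactly $T^+_s u_-\le u_-$. (Should one wish to bypass semiconcavity, the same bound follows from $u_-\prec L$: for $\gamma$ with $\gamma(0)=x$ one has $u_-(\gamma(s))-u_-(x)\le\int_0^s L(\gamma,\dot\gamma,u_-(\gamma))\,d\tau$, and comparing with (\ref{T+}) via the Lipschitz bound (H4) on $L$ in the $u$-slot yields $T^+_s u_--u_-\le\Theta\int_0^s\|T^+_\sigma u_--u_-\|_\infty\,d\sigma$; a Gronwall estimate gives $\|T^+_s u_--u_-\|_\infty=O(s)$, hence $T^+_s u_--u_-=O(s^2)$, and iterating through the semigroup with $\|T^+_t\varphi-T^+_t\psi\|_\infty\le e^{\Theta t}\|\varphi-\psi\|_\infty$ while sending the step to $0$ forces $T^+_t u_-\le u_-$.)

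\emph{Step 2 (convergence and identification).} By Step 1 the family $\{T^+_t u_-\}_{t\ge 0}$ is nonincreasing; by the a priori estimates of \cite{Ni} (resting on the coercivity of $H$ and Lemma \ref{mu}, together with the bound $T^+_t u_-\le u_-$) it is uniformly bounded and equi-Lipschitz, so it converges uniformly on $M$ to some $u_+\le u_-$. Continuity of $T^+_s$ under uniform convergence gives $T^+_s u_+=\lim_t T^+_s T^+_t u_-=\lim_t T^+_{t+s}u_-=u_+$, so $u_+$ is a fixed point of $T^+_t$, and by Proposition \ref{eqvvv} the function $-u_+$ is a viscosity solution of (\ref{FE}). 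This turns a solution of (\ref{hj}) into one of (\ref{FE}); since $F$ satisfies (H1)(H3)(H4)(H8) and $H(x,p,u)=F(x,-p,-u)$, applying the statement just proved to $F$ in place of $H$ yields the reverse implication, so the two solvability properties are equivalent.

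\emph{Step 3 ($\mathcal I_{u_-}\neq\emptyset$), and the main obstacle.} I want a zero of $u_--u_+\ge 0$; the natural candidate is a minimizer $\bar x$ of $u_--u_+$, with minimal value $m$, and the aim is $m=0$. Following \cite{Ni}, I would argue that $\mathcal I_{u_-}$ contains the projected Aubry set of (\ref{hj}), whose nonemptiness is the main output of the implicit weak KAM theory there; concretely, one tracks a $(u_-,L,0)$-calibrated curve through $\bar x$ (Definition \ref{bws}(2)) and, using that $u_+$ is a forward weak KAM solution of (\ref{hj}) by Proposition \ref{eqvvv} (so $u_+\prec L$), combines the calibration identity for $u_-$ with the domination inequality for $u_+$, controls the residual Lagrangian differences by (H4) and the uniform smallness of $T^+_t u_--u_+$ from Step 2, and passes to a limit $t\to+\infty$ along a subsequence to force $m\le 0$, hence $m=0$. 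This last step is the hard part: in the classical setting the action along a calibrated curve is exactly conserved, which pins down the Aubry set at once, whereas here the Lagrangian in (\ref{T+}) is evaluated along the \emph{unknown evolving} function $T^+_\cdot u_-$, so every comparison produces a residual, and the crux is to show these residuals are negligible along a suitable subsequence --- rather than merely bounded by $\Theta$ times a long-time integral of $u_--u_+$, which would be useless --- so that the clean conclusions ``$T^+_t u_-\le u_-$'' and ``$\mathcal I_{u_-}\neq\emptyset$'' survive the absence of a monotonicity condition on the $u$-dependence. Here (H4) and the uniform convergence of Step 2 carry the burden, while convexity of $H$ enters only through the semiconcavity used in Step 1, in line with Remark \ref{rem3}.
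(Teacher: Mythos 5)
The paper does not prove this proposition: it is imported verbatim as \cite[Theorem~3]{Ni}, so there is no in-house argument to compare yours against. What I can do is assess the sketch on its own terms.

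Your Steps 1 and 2 are structurally in line with the implicit/contact weak KAM machinery that \cite{Ni} develops. The semiconcavity route to showing $-u_-$ is a viscosity subsolution of (\ref{FE}) is sound: on a closed manifold, a solution of the convex coercive equation (\ref{hj}) is Lipschitz and semiconcave, so $D^+u_-\neq\emptyset$ everywhere, and $D^-u_-(x_0)\neq\emptyset$ forces differentiability; the sign flip then gives the subsolution inequality for $F$. Combining this with the comparison principle for the Cauchy problem (which does hold under (H4), unlike for the stationary equation) correctly yields $\bar T^-_s(-u_-)\geq -u_-$, i.e.\ $T^+_s u_-\leq u_-$, and then the semigroup property gives monotonicity. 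Step 2 is routine. Two caveats: (i) you should say explicitly that $F$ inherits (H1)(H3)(H4)(H8) from $H$, and that the well-posedness and comparison for the Cauchy problem under Lipschitz-in-$u$ dependence is exactly one of the results \cite{Ni} supplies — it is not free; (ii) the parenthetical ``alternative'' Gronwall argument is incorrect as written: the inequality
$T^+_s u_- - u_- \leq \Theta\int_0^s\|T^+_\sigma u_--u_-\|_\infty\,d\sigma$
does not follow from (H4) alone (it omits the direct contribution from the supremum over curves), and even if it did, Gronwall on $g(s)=\|T^+_s u_--u_-\|_\infty\leq\Theta\int_0^s g$ gives $g\equiv 0$, not $O(s)$, and ``$O(s)$ hence $O(s^2)$'' is a non sequitur. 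Drop that aside; the semiconcavity argument suffices.

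Step 3 is where the real content lies, and you acknowledge that you only have an outline. The claim that one can ``pass to a limit $t\to+\infty$ along a subsequence to force $m\leq 0$'' is precisely the nontrivial point: one has to produce a calibrated curve for $u_-$ along which the residual $\int(L(\cdot,\cdot,u_-)-L(\cdot,\cdot,T^+_{t-\tau}u_-))\,d\tau$ vanishes in the limit, and unlike the classical case this requires controlling the evolving argument of $L$ uniformly in time, not just pointwise. Your sketch identifies the right ingredients (calibrated curves from Definition~\ref{bws}(2), the domination $u_+\prec L$, (H4), and uniform convergence from Step~2) but does not actually close the estimate, so as it stands the proof of $\mathcal I_{u_-}\neq\emptyset$ is incomplete. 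Since this is an external theorem, the honest conclusion is that your Steps 1--2 give a credible reconstruction while Step 3 would need the detailed residual estimate from \cite{Ni} to be made rigorous.
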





In the following, we assume that $H_i(x,p,u,u_j)$ is strictly increasing in $u$, then so is $H(x,p,u)$. If there exists a viscosity solution $u_-$ of (\ref{hjj}), then by \cite[Theorem 3.2]{inc}, it is unique. 
\begin{proposition}\label{u+}
Let $v_+$ be a forward weak KAM solution of (\ref{hjj}), then the set
\[\mathcal I_{v_+}:=\{x\in M:\ v_+(x)=u_-(x)\}\]
is nonempty. Moreover, we have
\[u_-(y)-(C+\Theta\|v(x)\|_\infty+\Theta\|u_-\|_\infty)\mu e^{\Theta\mu}\leq v_+(x)\leq u_-(x),\quad y\in\mathcal I_{v_+}.\]
where $C>0$ and $\mu>0$ are constants given in Lemma \ref{mu}.
\end{proposition}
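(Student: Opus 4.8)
The plan is to exploit the variational characterizations from Propositions \ref{m1}, \ref{eqvvv} and \ref{m3}. First I would establish $v_+(x)\le u_-(x)$ for all $x\in M$. Since $u_-$ is a viscosity solution of (\ref{hj}), it is a fixed point of $T^-_t$, hence (by domination $u_-\prec L$) it is also dominated in the forward sense, so $T^+_t u_-\le u_-$; combined with Proposition \ref{m3}, the monotone limit $u_+=\lim_{t\to+\infty}T^+_t u_-$ satisfies $u_+\le u_-$. On the other hand, any forward weak KAM solution $v_+$ is a fixed point of $T^+_t$, and I would compare $v_+$ with $u_-$ using the comparison/ordering structure: because $H(x,p,u)$ is strictly increasing in $u$ the solution $u_-$ is the unique viscosity solution, and one shows $v_+\le u_-$ either directly from the strict monotonicity (any forward weak KAM solution lies below the unique backward solution) or by noting $v_+=T^+_t v_+$ and iterating the inequality $T^+_t w\le T^+_t u_-$ is not automatic — so instead I would argue at a maximum point of $v_+-u_-$ using the calibrated curve for $v_+$ together with $u_-\prec L$ to get a contradiction unless $v_+\le u_-$, which simultaneously shows the contact set $\mathcal I_{v_+}=\{v_+=u_-\}$ is nonempty (the maximum of $v_+-u_-$ is attained, and if it were positive one derives a contradiction, so it is $\le 0$; nonemptiness of $\mathcal I_{v_+}$ needs the extra argument below).

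Next, for the nonemptiness of $\mathcal I_{v_+}$ and the quantitative lower bound, I would use the calibrated curve for $v_+$. Fix any $x\in M$ and let $\gamma_+:[0,+\infty)\to M$ be a $(v_+,L,0)$-calibrated curve with $\gamma_+(0)=x$. Pick a time $t$ and use the Lemma \ref{mu}/Remark \ref{C'} bound: along a suitably chosen short curve (covering $M$, which has finite diameter, in time at most $\mu$) one has the a priori bound on $\int L$ in terms of $C+\Theta\|v(x)\|_\infty+\Theta\|u_-\|_\infty$. Concretely, join $x$ to a point $y$ realizing the relevant extremum by a curve of speed $\le\delta$, apply $u_-\prec L$ on that curve, and apply the calibration equality for $v_+$ on $\gamma_+$; the Lipschitz-in-$u$ dependence of $L$ (constant $\Theta$) produces the Gronwall-type factor $e^{\Theta\mu}$. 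This yields $v_+(x)\ge u_-(y)-(C+\Theta\|v\|_\infty+\Theta\|u_-\|_\infty)\mu e^{\Theta\mu}$ for a point $y$ on the calibrated curve; pushing $t\to+\infty$ and using that calibrated curves accumulate on the projected Aubry set (Proposition \ref{m3}) lets me take $y\in\mathcal I_{v_+}$, and in particular forces $\mathcal I_{v_+}\ne\emptyset$ because on that set $v_+=u_-$.

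The main obstacle I anticipate is the bookkeeping in the Gronwall estimate: tracking how the $\Theta$-Lipschitz dependence of $L$ in the $u$-variable, applied along a curve of length $\le\mathrm{diam}(M)$ traversed in time $\le\mu$, converts the crude bound $L(x,\dot x,0)\le C+\Theta\|v\|_\infty$ from Remark \ref{C'} into the stated constant $(C+\Theta\|v\|_\infty+\Theta\|u_-\|_\infty)\mu e^{\Theta\mu}$, and making sure the same estimate is compatible with both the calibration identity for $v_+$ and the domination inequality for $u_-$ at the chosen endpoints. The inequality $v_+\le u_-$ itself and the nonemptiness of the contact set should follow cleanly once the strict monotonicity of $H$ in $u$ is invoked to pin down $u_-$ as the unique backward weak KAM solution and the convergence $T^+_t u_-\to u_+\le u_-$ from Proposition \ref{m3} is in hand; the delicate point is purely the explicit constant.
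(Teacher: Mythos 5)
Your proposal has two genuine gaps, both at points the paper handles differently. First, the nonemptiness of $\mathcal I_{v_+}$ (and the inequality $v_+\le u_-$) is never actually established in your sketch: you hedge between ``directly from strict monotonicity'' and a maximum-point argument for $v_+-u_-$ that, as written, does not produce a contradiction (subtracting the calibration identity for $v_+$ from the domination inequality for $u_-$ along $\gamma_+$ gives an inequality consistent with a positive maximum, unless one quantifies the monotonicity and controls signs along the whole curve), and your route to a point $y\in\mathcal I_{v_+}$ invokes ``calibrated curves accumulate on the projected Aubry set (Proposition \ref{m3})'', which is not what Proposition \ref{m3} says -- that proposition concerns $\mathcal I_{u_-}$ for the backward solution, and no accumulation result for forward calibrated curves is available in this Lipschitz-in-$u$ setting. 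The paper gets both facts in one stroke by applying Proposition \ref{m3} symmetrically (i.e.\ to $F$, equivalently running $T^-_t$ on $v_+$): $T^-_tv_+$ is monotone in $t$ and converges to a viscosity solution of (\ref{hj}), which by the strict monotonicity of $H$ in $u$ is the unique $u_-$; hence $v_+\le u_-$ and the contact set $\{v_+=u_-\}$ is exactly the (nonempty) projected Aubry set of $v_+$.

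Second, your quantitative lower bound is assembled from the wrong ingredients. You propose to use the forward calibrated curve $\gamma_+$ issuing from $x$ together with $u_-\prec L$; but Lemma \ref{mu} and Remark \ref{C'} only bound $L$ for speeds $\le\delta$, and nothing controls the speed of, or the action along, a calibrated curve, so the calibration identity cannot be converted into the stated constant. Moreover, lower-bounding $v_+(x)$ through the calibration identity requires a lower bound on $v_+(\gamma_+(t))$, which is circular. The paper's argument uses no calibrated curve at all: fix $y\in\mathcal I_{v_+}$, join $x$ to $y$ by a geodesic $\alpha$ on $[0,\mu]$ with speed $\le\delta$, and use only the domination $v_+\prec L$ along $\alpha$. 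The essential device you are missing is the first-crossing-time trick: since $v_+$ has no a priori lower bound, one cannot bound $L(\alpha,\dot\alpha,v_+(\alpha(\tau)))$ by a constant; instead one takes the first $\sigma$ with $v_+(\alpha(\sigma))=u_-(y)$, so that on $[s,\sigma)$ one has $v_+(\alpha(\tau))<u_-(y)$ and hence $L(\alpha,\dot\alpha,v_+(\alpha(\tau)))\le L(\alpha,\dot\alpha,0)+\Theta\|u_-\|_\infty+\Theta\bigl(u_-(y)-v_+(\alpha(\tau))\bigr)$, after which Gronwall applied to $G(\sigma-s)=u_-(y)-v_+(\alpha(s))$ yields exactly the constant $(C+\Theta\|v\|_\infty+\Theta\|u_-\|_\infty)\mu e^{\Theta\mu}$. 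You correctly identified the Gronwall bookkeeping as the crux, but without the crossing-time argument (and with the roles of $v_+\prec L$ and $u_-\prec L$ interchanged) the estimate does not go through as sketched.
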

\begin{proof}
Since $u_-$ is unique, by Proposition \ref{m3}, $u_-=\lim_{t\to+\infty} T^-_tv_+$ for each forward weak KAM solution $v_+$, and $\mathcal I_{v_+}$ is nonempty.

Since $v_+\leq u_-$, we only need to show that $v_+$ has a lower bound. We take $y\in \mathcal I_{v_+}$ and $x\in M$, then $v_+(y)=u_-(y)$. Let $\alpha:[0,\mu]\rightarrow M$ be a geodesic satisfying $\alpha(0)=x$ and $\alpha(\mu)=y$ with constant speed, then $\|\dot \alpha\|\leq \delta$. If $v_+(x)\geq u_-(y)$, then the proof is finished. If $v_+(x)<u_-(y)$, since $v_+(y)=u_-(y)$, there is $\sigma\in(0,\mu]$ such that $v_+(\alpha(\sigma))=u_-(y)$ and $v_+(\alpha(s))<u_-(y)$ for all $s\in [0,\sigma)$. By \cite[Proposition 2.5]{Ish2}, we have
\begin{equation*}
\begin{aligned}
  v_+(\alpha(\sigma))-v_+(\alpha(s))\leq \int_s^\sigma L(\alpha(\tau),\dot \alpha(\tau),v_+(\alpha(\tau)))d\tau,
\end{aligned}
\end{equation*}
which implies
\begin{equation*}
\begin{aligned}
  &u_-(y)-v_+(\alpha(s))
  \leq \int_s^\sigma L(\alpha(\tau),\dot \alpha(\tau),v_+(\alpha(\tau)))d\tau
  \\ &\leq \int_s^\sigma L(\alpha(\tau),\dot \alpha(\tau),u_-(y))d\tau
  +\Theta\int_s^\sigma(u_-(y)-v_+(\alpha(\tau)))d\tau
  \\ &\leq L_0\mu+\Theta\int_s^\sigma(u_-(y)-v_+(\alpha(\tau)))d\tau,
\end{aligned}
\end{equation*}
where
\begin{equation*}
  L_0:=C+\Theta\|v(x)\|_\infty+\Theta\|u_-\|_\infty.
\end{equation*}
Let $G(\sigma-s)=u_-(y)-v_+(\alpha(s))$, then
\begin{equation*}
  G(\sigma-s)\leq L_0\mu+\Theta\int_0^{\sigma-s}G(\tau)d\tau,
\end{equation*}
By the Gronwall inequality we get
\begin{equation*}
  u_-(y)-v_+(\alpha(s))\leq L_0\mu e^{\Theta(\sigma-s)}\leq L_0\mu e^{\Theta\mu},\quad \forall s\in[0,\sigma).
\end{equation*}
Therefore $v_+(x)\geq u_-(y)-L_0\mu e^{\Theta\mu}$.
\end{proof}

\section{Proof of Theorem \ref{M4}}\label{sec3}

Let $(u_1,u_2)$ be the solution of (\ref{LCau}). 
We take $r>b_{12}>0$ to be close to $b_{12}$. Define $v_1=u_1/r$, then the pair $(v_1,u_2)$ satisfies
\begin{equation*}
  \left\{
   \begin{aligned}
   &\partial_t v_1+r^{-1}h_1(x,rDv_1)+\lambda_{11}(x)v_1+r^{-1}\lambda_{12}(x)u_2=0,\\
   &\partial_t u_2+h_2(x,Du_2)+\lambda_{22}(x)u_2+r\lambda_{21}(x)v_1=0.\\
   \end{aligned}
   \right.
\end{equation*}
Similar to Proposition \ref{perronequ}, we get $\lambda_{11}(x)+r^{-1}\lambda_{12}(x)>0$ 
and $\lambda_{22}(x)+r\lambda_{21}(x)>0$. In all the following proofs, we always assume (\ref{perron}), since $u_1=rv_1$, $r>0$ and $(v_1,u_2)$ satisfies (\ref{LE}) with (\ref{perron}) holds. If $(v_1,u_2)$ converges as $t\to+\infty$, then $(u_1,u_2)$ also converges. The proofs of Lemmas \ref{comp1} and \ref{unibdd} are standard, for similar results, one can refer to \cite[Appendix A.1]{ran}. It is worth mentioning that Lemmas \ref{comp1} and \ref{unibdd} also hold when $\chi\leq 1$. 

\begin{lemma}\label{comp1}
Assume (h1) (\ref{mono2}) and $\chi<1$. For each $T>0$, let $(u_1,u_2)$ (resp. $(v_1,v_2)$) be a bounded u.s.c. subsolution (resp. bounded l.s.c. supersolution) of
\begin{equation}\label{evol}
  \partial_t u_i+h_i(x,Du_i)+\sum_{j=1}^2\lambda_{ij}(x)u_j=0,\quad i\in\{1,2\},\quad (x,t)\in M\times(0,T)
\end{equation}
with $(u_1(x,0),u_2(x,0))\leq (\varphi_1,\varphi_2)$ (resp. $(v_1(x,0),v_2(x,0))\geq (\varphi_1,\varphi_2)$). If either $(u_1,u_2)$ or $(v_1,v_2)$ is Lipschitz continuous, then $u_i\leq v_i$ for $i\in\{1,2\}$.
\end{lemma}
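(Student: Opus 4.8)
The plan is to carry out, at the level of sub- and supersolutions, the same linear rescaling of the first unknown used in the paragraph preceding the statement, and then to invoke the comparison principle for \emph{monotone} weakly coupled systems, namely \cite[Proposition 3.1]{Cam1}. Concretely, given a continuous supersolution $(u_1,u_2)$ and a continuous subsolution $(v_1,v_2)$ of (\ref{evol}) with the stated initial inequalities, I would set $\tilde u_1:=u_1/b_{12}$, $\tilde v_1:=v_1/b_{12}$, leave the second components unchanged, and replace the datum $\varphi_1$ by $\varphi_1/b_{12}$. Since $b_{12}>0$, multiplication by $b_{12}^{-1}$ is an order-preserving affine change of the unknown, so a $C^1$ test function $\phi$ touching $u_1$ from above at a point corresponds to the test function $\phi/b_{12}$ touching $\tilde u_1$ from above there (and similarly for minima and for the parabolic terms). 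Hence $(\tilde u_1,u_2)$ is a viscosity supersolution and $(\tilde v_1,v_2)$ a viscosity subsolution of the rescaled system
\begin{equation*}
  \left\{
   \begin{aligned}
   &\partial_t w_1+b_{12}^{-1}h_1(x,b_{12}Dw_1)+\lambda_{11}(x)w_1+b_{12}^{-1}\lambda_{12}(x)w_2=0,\\
   &\partial_t w_2+h_2(x,Dw_2)+b_{12}\lambda_{21}(x)w_1+\lambda_{22}(x)w_2=0,
   \end{aligned}
   \right.
\end{equation*}
with $(\tilde u_1(x,0),u_2(x,0))\geq(\varphi_1/b_{12},\varphi_2)$ and $(\tilde v_1(x,0),v_2(x,0))\leq(\varphi_1/b_{12},\varphi_2)$.

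The crucial point is that the rescaled coupling matrix
\[\widetilde\Lambda(x):=\begin{pmatrix}\lambda_{11}(x)&b_{12}^{-1}\lambda_{12}(x)\\ b_{12}\lambda_{21}(x)&\lambda_{22}(x)\end{pmatrix}\]
satisfies the monotonicity condition (\ref{stan}) uniformly in $x$. Its off-diagonal entries are negative by (\ref{mono2}) and $b_{12}>0$. Its first row sum $\lambda_{11}(x)+b_{12}^{-1}\lambda_{12}(x)$ is $\geq 0$ because the definition (\ref{bij}) of $b_{12}$ gives $b_{12}\geq -\lambda_{12}(x)/\lambda_{11}(x)$, i.e. $b_{12}\lambda_{11}(x)\geq -\lambda_{12}(x)$, for every $x$. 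Its second row sum $\lambda_{22}(x)+b_{12}\lambda_{21}(x)$ is $\geq 0$ because $b_{21}\geq -\lambda_{21}(x)/\lambda_{22}(x)$ together with $\chi=b_{12}b_{21}\leq 1$ gives $-b_{12}\lambda_{21}(x)\leq b_{12}b_{21}\lambda_{22}(x)\leq \lambda_{22}(x)$. Moreover, the rescaled Hamiltonian $p\mapsto b_{12}^{-1}h_1(x,b_{12}p)$ is still continuous and coercive (scaling the momentum by the positive constant $b_{12}$ preserves (h1)), the $\lambda_{ij}$ are continuous, and scaling a function by a positive constant preserves Lipschitz regularity, so one of $(\tilde u_1,u_2)$, $(\tilde v_1,v_2)$ is Lipschitz continuous, as \cite[Proposition 3.1]{Cam1} requires.

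I would then apply \cite[Proposition 3.1]{Cam1} directly to the rescaled system on $M\times[0,T]$, obtaining $\tilde v_1\leq\tilde u_1$ and $v_2\leq u_2$; multiplying the first inequality by $b_{12}>0$ yields $v_1\leq u_1$, which is the assertion. There is no genuine analytic obstacle here, since \cite[Proposition 3.1]{Cam1} does the heavy lifting; the one point requiring care is the verification that both row-sum conditions for $\widetilde\Lambda(x)$ hold for \emph{all} $x\in M$, which is exactly where the hypotheses (\ref{mono2}), the definition (\ref{bij}) of $b_{ij}$, and $\chi\leq 1$ enter. A secondary, purely bookkeeping, point is to check that the affine change of the unknown transports the notion of parabolic viscosity sub/supersolution and the initial-time inequalities correctly, which is standard.
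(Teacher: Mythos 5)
Your rescaling is exactly the one the paper performs in the paragraph preceding the lemma, and your verification that the rescaled coupling matrix satisfies (\ref{stan}) on all of $M$, using (\ref{mono2}), the definition (\ref{bij}) of $b_{ij}$, and $\chi=b_{12}b_{21}\leq 1$, is correct and matches the paper. The problem is the final step: you say you would then ``apply \cite[Proposition 3.1]{Cam1} directly,'' with no further work, and you describe the Lipschitz hypothesis merely as a condition Camilli's proposition already requires. That is where there is a genuine gap. As the paper points out, \cite[Proposition 3.1]{Cam1} relies on a structural assumption \cite[(2.2)(iv)]{Cam1} on the Hamiltonians which does \emph{not} follow from (h1) (coercivity) alone, so the cited proposition cannot simply be quoted in this setting.

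The entire body of the paper's proof of Lemma \ref{comp1} is devoted to removing that hypothesis. Working locally in coordinates, the paper revisits Camilli's doubling-of-variables argument and the auxiliary function $\Psi$, and shows that, thanks to the Lipschitz continuity of $(u_1,u_2)$ or $(v_1,v_2)$, the penalization gradient $\bar p=(\bar x-\bar y)/\alpha$ produced at the maximum point is uniformly bounded (by twice the Lipschitz constant, up to the vanishing $\beta$-term). This reduces the needed regularity of $h_i$ to uniform continuity on a fixed compact set $U\times\bar B(0,2K_2)\times[-K_1,K_1]^2$, which holds without \cite[(2.2)(iv)]{Cam1}. In other words, the Lipschitz hypothesis in the lemma is not a bookkeeping detail that makes Camilli's statement applicable; it is the substitute for a hypothesis you have silently assumed. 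Your proof should either verify \cite[(2.2)(iv)]{Cam1} (which it cannot, under (h1) alone), or explicitly redo the maximum-principle step with the boundedness of $\bar p$ in hand, as the paper does.
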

\begin{proof}
The proof is a standard doubling variable method. We argue by contradiction. Assume $\max_{i\in\{1,2\}}\max_{(x,t)\in M\times[0,T]}(u_i(x,t)-v_i(x,t))>0$. Then there is a small $\eta>0$ such that
\[\bar M:=\max_{i\in\{1,2\}}\max_{(x,t)\in M\times[0,T]}(u_i(x,t)-\eta t-v_i(x,t))>0.\]
This maximum can be attained at $(x_0,t_0,i_0)$ with $t_0\in (0,T]$, since $M\times[0,T]$ is compact, and $u_i(x,0)\leq v_i(x,0)$ for $i=1,2$. Choosing a chart around $x_0$, we can assume $x_0=0$ and that $M$ is an open bounded subset $U$ of $\mathbb R^n$. Define
\[\Psi(x,y,t,s,j)=u_j(x,t)-v_j(y,s)-\frac{|x-y|^2}{2\alpha}-\frac{|t-s|^2}{2\mu}-|x|^2-\eta t,\]
where $\alpha$ and $\mu$ are positive constants. In the following, we are going to take $\alpha,\mu\to 0$. The function $\Psi(x,y,s,t,j)$ is upper-semicontinuous, then the maximum of $\Psi$ can be achieved at $(\bar x,\bar y,\bar s, \bar t,\bar j)\in \bar U\times\bar U\times [0,T]\times [0,T]\times\{1,2\}$. By
\begin{equation}\label{psi>}
  \Psi(\bar x,\bar y,\bar t,\bar s,\bar j)\geq \Psi(0,0,t_0,t_0,i_0)=\bar M,
\end{equation}
we get that
  $\frac{|\bar x-\bar y|^2}{2\alpha}$ and $\frac{|\bar t-\bar s|^2}{2\mu}$
are bounded, which implies $|\bar x-\bar y|\to 0$ and $|\bar t-\bar s|\to 0$ as $\alpha \to 0 $ and $\mu\to 0$. We now show that $\bar x\to 0$ and $\bar y\to 0$. Otherwise, there are sequences $\alpha_n\to 0$ and $\mu_n\to 0$ such that $\bar x\to z$, $\bar y\to z$, $z\neq 0$, $\bar t\to\tau$ and $\bar s\to \tau$. By the semi-continuity of $(u_1,u_2)$ and $(v_1,v_2)$, for $\alpha_n$ and $\mu_n$ small enough, we get
\[u_{\bar j}(\bar x,\bar t)-v_{\bar j}(\bar y,\bar s)-\frac{|\bar x-\bar y|^2}{2\alpha}-\frac{|\bar t-\bar s|^2}{2\mu}-|\bar x|^2-\eta \bar t\leq u_{\bar j}(z,\tau)-v_{\bar j}(z,\tau)-|z|^2-\eta \tau+\varepsilon<\bar M,\]
where $\varepsilon>0$ is small. This contradicts (\ref{psi>}). Thus, $\bar x$ and $\bar y$ are contained in the open set $U$ for $\alpha$ and $\mu$ small enough. 

Since $(u_1,u_2)$ is a subsolution, we get
\[\frac{\bar t-\bar s}{2\mu}+h_{\bar j}(\bar x,\bar p+2\bar x)+\sum_{k=1}^2\lambda_{\bar jk}(\bar x)u_k(\bar x,\bar t)\leq -\eta.\]
Since $(v_1,v_2)$ is a supersolution, we get
\[\frac{\bar t-\bar s}{2\mu}+h_{\bar j}(\bar y,\bar p)+\sum_{k=1}^2\lambda_{\bar jk}(\bar y)v_k(\bar y,\bar s)\geq 0.\]
Here we set $\bar p=\frac{\bar x-\bar y}{\alpha}$. Subtracting the above two inequalities, we get
\begin{equation}\label{eta<}
  h_{\bar j}(\bar x,\bar p+2\bar x)-h_{\bar j}(\bar y,\bar p)+\sum_{k=1}^2(\lambda_{\bar jk}(\bar x)u_k(\bar x,\bar t)-\lambda_{\bar jk}(\bar y)v_k(\bar y,\bar s))\leq -\eta.
\end{equation}
Since $u_{\bar j}(\bar x,\bar t)-v_{\bar j}(\bar y,\bar s)\geq \Psi(\bar x,\bar y,\bar t,\bar s,\bar j)>0$, we get \[\sum_{k=1}^2\lambda_{\bar jk}(\bar x)u_k(\bar x,\bar t)\geq \sum_{k=1}^2\lambda_{\bar jk}(\bar x)v_k(\bar y,\bar s).\]
Note that $\bar x\to 0$, $\bar y \to 0$, and $(u_1,u_2)$ is bounded. If $\bar p$ is bounded, we can let $\alpha\to 0$ and $\mu\to 0$ in (\ref{eta<}) to get $0\leq -\eta$. This leads to a contradiction.

It remains to prove that $\bar p$ is bounded. We take $x=y=\bar x$, $t=\bar t$ and $s=\bar s$ in $\Psi$, then
\[u_j(\bar x,\bar t)-v_j(\bar x,\bar s)-\frac{|\bar t-\bar s|^2}{2\mu}-|\bar x|^2-\eta \bar t
\leq u_j(\bar x,\bar t)-v_j(\bar y,\bar s)-\frac{|\bar x-\bar y|^2}{2\alpha}-\frac{|\bar t-\bar s|^2}{2\mu}-|\bar x|^2-\eta \bar t,\]
which implies
\[\frac{|\bar x-\bar y|^2}{2\alpha}\leq v_j(\bar x,\bar s)-v_j(\bar y,\bar s).\]
Now we assume that $(v_1,v_2)$ is Lipschitz continuous. Let $\kappa$ be the Lipschitz constant of $(v_1,v_2)$. Then
\[\frac{|\bar x-\bar y|^2}{2\alpha}\leq \kappa|\bar x-\bar y|.\]
We conclude that
\[|\bar p|=\frac{|\bar x-\bar y|}{\alpha}\leq 2\kappa.\]
The proof is now complete.
\end{proof}


\begin{lemma}\label{unibdd}
Assume (h1) and $\chi<1$. For all $(\varphi_1,\varphi_2)\in C(M,\mathbb R^2)$, the viscosity solution $(u_1,u_2)$ of (\ref{LCau}) exists. It is unique and uniformly bounded for all $t>0$. If the initial function $(\varphi_1,\varphi_2)$ is Lipschitz continuous, $(u_1,u_2)$ is equi-Lipschitz continuous for $(x,t)\in M\times[0,+\infty)$.
\end{lemma}
\begin{proof}
We first assume that $(\varphi_1,\varphi_2)$ is of class $C^1$. We first show the existence of $(u_1,u_2)$ by Perron's method.  Define
\[K_3:=\max_{i\in\{1,2\}}\{|h_i(x,p)|: \ x\in M,\ |p|\leq \|D\varphi_i\|_\infty\},\]
and
\[K_4:=K_3+\max_{i\in\{1,2\}}\sum_{j=1}^2\lambda_{ij}(x)\|\varphi_j(x)\|_\infty.\]
By (\ref{perron}), $(\varphi_1+K_4 t,\varphi_2+K_4 t)$ (resp. $(\varphi_1-K_4t,\varphi_2-K_4 t)$) is a supersolution (resp. subsolution) of (\ref{evol}). For each $T>0$, we define a family of subsolutions
\begin{equation*}
\begin{aligned}
  \mathcal S:=\{&(w_1,w_2):
  \\ &(w_1,w_2)\ \textrm{is}\ \textrm{a}\ \textrm{subsolution}\ \textrm{of}\ (\ref{evol})\ \textrm{with}\ (w_1,w_2)\leq (\varphi_1+K_4t,\varphi_2+K_4t)\}.
\end{aligned}
\end{equation*}
By \cite[Theorem 3.3]{Ish3}, the pair $(u_1,u_2)$ with component
\[u_i(x,t):=\sup_{(w_1,w_2)\in\mathcal S} w_i(x,t)\]
is a solution of (\ref{LCau}).

By Proposition \ref{perronequ} and Lemma \ref{stuni}, (\ref{LE}) has a unique solution when $\chi<1$. Let $(v_1,v_2)$ be the solution of $(\ref{LE})$. By (h1) and Lemma \ref{equisub}, $(v_1,v_2)$ is Lipschitz continuous. By (\ref{perron}), we can take $K_5>0$ large enough such that $(v_1+K_5,v_2+K_5)$ and $(v_1-K_5,v_2-K_5)$ are supersolution and subsolution of (\ref{evol}) respectively. Then by Lemma \ref{comp1}, we have $(v_1-K_5,v_2-K_5)\leq (u_1,u_2)\leq (v_1+K_5,v_2+K_5)$, which implies that $(u_1,u_2)$ is uniformly bounded.

Now we prove the regularity of $(u_1,u_2)$. For each $h>0$, we define
\begin{equation*}
  \bar w_i(x,t)=\left\{
   \begin{aligned}
   &\varphi_i(x)-K_4t,\quad 0\leq t\leq h\\
   &u_i(x,t-h)-K_4h,\quad t>h.\\
   \end{aligned}
   \right.
\end{equation*}
For $t<h$, $(\bar w_1,\bar w_2)$ is a classical subsolution. Since $(\varphi_1-K_4 t,\varphi_2-K_4 t)\leq (u_1,u_2)\leq (\varphi_1+K_4t,\varphi_2+K_4 t)$, there is no smooth function supertangent to $(\bar w_1,\bar w_2)$ at $t=h$ unless $(\bar w_1,\bar w_2)$ is differentiable at $t=h$. Thus, $(\bar w_1,\bar w_2)$ is a subsolution for $t\leq h$. For $t>h$, by (\ref{perron}) we have
\begin{equation*}
   \begin{aligned}
   &\partial_t \bar w_i+h_i(x,D\bar w_i)+\sum_{j=1}^2\lambda_{ij}(x)\bar w_j(x)
   \\ &=\partial_t u_i(x,t-h)+h_i(x,Du_i(x,t-h))+\sum_{j=1}^2\lambda_{ij}(x)(u_j(x,t-h)-K_4h)
   \\ &\leq \partial_t u_i(x,t-h)+h_i(x,Du_i(x,t-h))+\sum_{j=1}^2\lambda_{ij}(x)u_j(x,t-h)=0
   \end{aligned}
\end{equation*}
in the viscosity sense. Thus, $(\bar w_1,\bar w_2)$ is a subsolution of (\ref{evol}) with $\bar w_i(x,0)=\varphi_i(x)$. Since $(u_1,u_2)$ is the supremum of $\mathcal S$, we have
\[\bar w_i(x,t+h)=u_i(x,t)-K_4h\leq u_i(x,t+h).\]
Therefore, we have $\partial_t u_i\geq -K_4$. By (\ref{evol}), (h1), Lemma \ref{equisub} and the uniform boundedness of $(u_1,u_2)$, we obtain that $\|Du_i\|_\infty$ is uniformly bounded almost everywhere. Then $\|\partial_t u_i\|_\infty$ is uniformly bounded almost everywhere.

The uniqueness of the solution of (\ref{LCau}) when the initial function is of class $C^1$ is given by Lemma \ref{comp1}. Now we consider the existence and uniqueness of (\ref{LCau}) when $(\varphi_1,\varphi_2)$ is continuous. Let $(\varphi_1,\varphi_2)$ be continuous. Then there is a sequence of smooth functions $(\varphi^n_1,\varphi^n_2)$ uniformly converges to $(\varphi_1,\varphi_2)$. We have known that the solution $(u^n_1,u^n_2)$ of (\ref{evol}) with initial function equals $(\varphi^n_1,\varphi^n_2)$ is unique and Lipschitz continuous. For $n_1$ and $n_2\in\mathbb N$, define
\[K_6:=\max_{i}\|\varphi^{n_1}_i-\varphi^{n_2}_i\|_\infty.\]
The pair $(u^{n_1}_1-K_6,u^{n_1}_2-K_6)$ is a subsolution of (\ref{evol}) with initial function smaller than $(\varphi^{n_2}_1,\varphi^{n_2}_2)$. By Lemma \ref{comp1}, for each $i\in\{1,2\}$, we have
\[u^{n_1}_i-K_6\leq u^{n_2}_i.\]
Exchanging $n_1$ and $n_2$, we get
\[\|u^{n_1}_i-u^{n_2}_i\|_\infty \leq K_6.\]
Therefore, $(u^n_1,u^n_2)$ is a Cauchy sequence, and uniformly converges to a continuous function $(u_1,u_2)$. By the stability of viscosity solutions, $(u_1,u_2)$ is a solution of (\ref{LCau}). Define
\[K_7:=\max_{i}\|\varphi_i-\varphi^n_i\|_\infty.\]
By the Lipschitz continuity of $(u^n_1,u^n_2)$, for all solutions $(u_1,u_2)$ of (\ref{LCau}), we have
\[u^n_i-K_7\leq u_i,\]
and
\[u_i-K_7\leq u^n_i,\]
which implies
\[\|u_i-u^n_i\|_\infty \leq K_7.\]
Thus, the solution of (\ref{LCau}) is unique. When $(\varphi_1,\varphi_2)$ is Lipschitz continuous, we can take an equi-Lipschitz continuous sequence $(\varphi^n_1,\varphi^n_2)\to (\varphi_1,\varphi_2)$. Then $(u^n_1,u^n_2)$ is equi-Lipschitz continuous by the argument above. We then get the Lipschitz continuity of $(u_1,u_2)$.
\end{proof}

Now we consider the large time behavior of solution of (\ref{LCau}). Assume that $(\varphi_1,\varphi_2)$ is Lipschitz continuous. For each $i\in\{1,2\}$, define
\[\check{\varphi}_i(x)=\liminf_{t\to+\infty}u_i(x,t),\]
and
\[V_i(x,t)=\inf_{s\geq 0}u_i(x,t+s).\]
By \cite[Lemma 3.1]{Ish2}, the pair $(V_1(x,t),V_2(x,t))$ is a supersolution of (\ref{evol}). By definition, $(V_1(x,t),V_2(x,t))$ is nondecreasing in $t$. Then the pointwise limit $\check{\varphi}_i(x):=\lim_{t\to+\infty}V_i(x,t)$ exists. By the equi-Lipschitz continuity of $(u_1,u_2)$, $(\check{\varphi}_1,\check{\varphi}_2)$ is continuous. By Dini's theorem, the limit procedure is uniform. By the stability of viscosity solutions, $(\check{\varphi}_1,\check{\varphi}_2)$ is a supersolution of (\ref{LE}). Similarly, define
\[\hat{\varphi}_i(x)=\limsup_{t\to+\infty}u_i(x,t),\]
then $(\hat{\varphi}_1,\hat{\varphi}_2)$ is a subsolution of (\ref{E}). By definition, $(\check{\varphi}_1,\check{\varphi}_2)\leq (\hat{\varphi}_1,\hat{\varphi}_2)$. By Lemma \ref{stuni}, $(\hat{\varphi}_1,\hat{\varphi}_2)\leq (v_1,v_2)\leq (\check{\varphi}_1,\check{\varphi}_2)$, which implies
\[\lim_{t\to+\infty}u_i(x,t)=v_i(x)\]
uniformly for each $i\in\{1,2\}$.

For continuous initial function, let $(u^-_1,u^-_2)$ (resp. $u^+_1,u^+_2$) be the unique solution of (\ref{evol}) with constant initial function $(-\|\varphi_1\|_\infty,-\|\varphi_2\|_\infty)$ (resp. $(\|\varphi_1\|_\infty,\|\varphi_2\|_\infty)$). Then $(u^-_1,u^-_2)\leq (u_1,u_2)\leq (u^+_1,u^+_2)$. We have shown that
\[\lim_{t\to+\infty}u^\pm_i(x,t)=v_i(x)\]
for each $i$, then
\[\lim_{t\to+\infty}u_i(x,t)=v_i(x)\]
uniformly for each $i\in\{1,2\}$.

The proof is now complete.



\medskip

\end{document}